\providecommand{\U}[1]{\protect \rule{.1in}{.1in}}
\newtheorem{theorem}{Theorem}
\theoremstyle{plain}
\newtheorem{corollary}{Corollary}
\newtheorem{definition}{Definition}
\newtheorem{remark}{Remark}
\numberwithin{equation}{section}
\begin{document}
\title[vanishing generalized Morrey estimates]{A class of sublinear operators and their commutators by with rough
kernels{\ }on vanishing generalized Morrey spaces}
\author{FER\.{I}T G\"{U}RB\"{U}Z}
\address{HAKKARI UNIVERSITY, FACULTY OF EDUCATION, DEPARTMENT OF MATHEMATICS EDUCATION,
HAKKARI, TURKEY }
\curraddr{}
\email{feritgurbuz@hakkari.edu.tr}
\urladdr{}
\thanks{}
\thanks{}
\thanks{}
\date{}
\subjclass[2010]{ 42B20, 42B25, 42B35}
\keywords{{Sublinear operator; Calder\'{o}n--Zygmund operator; Hard-Littlewood maximal
operator; fractional integral operator; fractional maximal operator; rough
kernel; vanishing Morrey spaces; vanishing generalized Morrey space;
commutator; BMO}}
\dedicatory{ }
\begin{abstract}
In this paper, we consider the boundedness of a class of sublinear operators
and their commutators by with rough kernels associated with
{Calder\'{o}n--Zygmund operator, Hard-Littlewood maximal operator, fractional
integral operator, }fractional maximal operator by with rough kernels both on
vanishing generalized Morrey spaces and vanishing Morrey spaces, respectively.

\end{abstract}
\maketitle

\section{Introduction and useful informations}

To characterize the regularity of solutions to some partial differential
equations(PDEs), Morrey \cite{Morrey} first introduced classical Morrey spaces
$M_{p,\lambda}$ which naturally are generalizations of Lebesgue spaces.

We will say that a function $f\in M_{p,\lambda}=M_{p,\lambda}\left(
{\mathbb{R}^{n}}\right)  $ if%
\begin{equation}
\sup_{x\in{\mathbb{R}^{n},r>0}}\left[  r^{-\lambda}%
{\displaystyle \int \limits_{B(x,r)}}
\left \vert f\left(  y\right)  \right \vert ^{p}dy\right]  ^{1/p}<\infty
.\label{1.1}%
\end{equation}
Here, $1<p<\infty$ and $0<\lambda<n$ and the quantity of (\ref{1.1}) is the
$\left(  p,\lambda \right)  $-Morrey norm, denoted by $\left \Vert f\right \Vert
_{M_{p,\lambda}}$. We also refer to \cite{Adams, Gurbuz0} for the latest
research on the theory of Morrey spaces associated with harmonic analysis. On
the other hand, the study of the operators of harmonic analysis in vanishing
Morrey space, in fact has been almost not touched. A version of the classical
Morrey space $M_{p,\lambda}({\mathbb{R}^{n}})$ where it is possible to
approximate by "nice" functions is the so called vanishing Morrey space
$VM_{p,\lambda}({\mathbb{R}^{n}})$ has been introduced by Vitanza in
\cite{Vitanza1} and has been applied there to obtain a regularity result for
elliptic PDEs. This is a subspace of functions in $M_{p,\lambda}%
({\mathbb{R}^{n}})$, which satisfies the condition%
\[
\lim_{r\rightarrow0}\sup_{\underset{0<t<r}{x\in{\mathbb{R}^{n}}}}\left[
t^{-\lambda}%
{\displaystyle \int \limits_{B(x,t)}}
\left \vert f\left(  y\right)  \right \vert ^{p}dy\right]  ^{1/p}=0,
\]
where $1<p<\infty$ and $0<\lambda<n$ for brevity, so that%
\[
VM_{p,\lambda}({\mathbb{R}^{n}})=\left \{  f\in M_{p,\lambda}({\mathbb{R}^{n}%
}):\lim_{r\rightarrow0}\sup_{\underset{0<t<r}{x\in{\mathbb{R}^{n}}}}%
t^{-\frac{\lambda}{p}}\Vert f\Vert_{L_{p}(B(x,t))}=0\right \}  .
\]

Later in \cite{Vitanza2} Vitanza has proved an existence theorem for a
Dirichlet problem, under weaker assumptions than in \cite{Miranda} and a
$W^{3,2}$ regularity result assuming that the partial derivatives of the
coefficients of the highest and lower order terms belong to vanishing Morrey
spaces depending on the dimension. For the properties and applications of
vanishing Morrey spaces, see also \cite{Cao-Chen}. It is known that, there is
no research regarding boundedness of the sublinear operators with rough kernel
on vanishing Morrey spaces.

Let $\Omega \in L_{s}(S^{n-1})$ with $1<s\leq \infty$ be homogeneous of degree
zero and satisfies the cancellation condition
\begin{equation}
\int \limits_{S^{n-1}}\Omega(x^{\prime})d\sigma(x^{\prime})=0,\label{0}%
\end{equation}
where $x^{\prime}=\frac{x}{|x|}$ for any $x\neq0$. We define $s^{\prime}%
=\frac{s}{s-1}$ for any $s>1$. Suppose that $T_{\Omega}$ represents a linear
or a sublinear operator, which satisfies that for any $f\in L_{1}%
({\mathbb{R}^{n}})$ with compact support and $x\notin suppf$
\begin{equation}
|T_{\Omega}f(x)|\leq c_{0}\int \limits_{{\mathbb{R}^{n}}}\frac{|\Omega
(x-y)|}{|x-y|^{n}}\,|f(y)|\,dy,\label{e1}%
\end{equation}
where $c_{0}$ is independent of $f$ and $x$. Similarly, we assume that
$T_{\Omega,\alpha}$, $\alpha \in \left(  0,n\right)  $ represents a linear or a
sublinear operator, which satisfies that for any $f\in L_{1}({\mathbb{R}^{n}%
})$ with compact support and $x\notin suppf$%

\begin{equation}
|T_{\Omega,\alpha}f(x)|\leq c_{0}\int \limits_{{\mathbb{R}^{n}}}\frac
{|\Omega(x-y)|}{|x-y|^{n-\alpha}}\,|f(y)|\,dy,\label{e2}%
\end{equation}
for some $\alpha \in \left(  0,n\right)  $, where $c_{0}$ is independent of $f$
and $x$.

We point out that the condition (\ref{e1}) in the case $\Omega \equiv1$ was
first introduced by Soria and Weiss in \cite{SW} . The conditions (\ref{e1})
and (\ref{e2}) are satisfied by many interesting operators in harmonic
analysis, such as the Calder\'{o}n-Zygmund (C-Z) operators, Carleson's maximal
operator, Hardy-Littlewood (H-L) maximal operator, C. Fefferman's singular
multipliers, R. Fefferman's singular integrals, Ricci-Stein's oscillatory
singular integrals, the Bochner-Riesz means, fractional Marcinkiewicz
operator, fractional maximal operator, fractional integral operator (Riesz
potential) and so on (see \cite{SW} for details).

Let $f\in L_{1}^{loc}\left(  {\mathbb{R}^{n}}\right)  $. The C-Z singular
integral operator $\overline{T}_{\Omega}$ and H-L maximal operator $M_{\Omega
}$ by with rough kernels are defined by%

\[
\overline{T}_{\Omega}f(x)=p.v.\int \limits_{{\mathbb{R}^{n}}}\frac{\Omega
(x-y)}{|x-y|^{n}}f(y)dy,
\]%
\[
M_{\Omega}f\left(  x\right)  =\sup_{t>0}\frac{1}{\left \vert B\left(
x,t\right)  \right \vert }\int \limits_{B\left(  x,t\right)  }\left \vert
\Omega \left(  x-y\right)  \right \vert \left \vert f\left(  y\right)
\right \vert dy
\]
satisfy condition (\ref{e1}), where a homogeneous of degree zero function
$\Omega \left(  x^{\prime}\right)  $ satisfies (\ref{0}) on the unit sphere and
belongs to $\Omega \in L_{s}(S^{n-1})$ with $1<s\leq \infty$.

It is obvious that when $\Omega \equiv1$, $\overline{T}_{\Omega}\equiv
\overline{T}$ and $M_{\Omega}\equiv M$ are the standard C-Z singular intagral
operator, briefly a C-Z operator and the H-L maximal operator, respectively.

On the other hand, in 1971, Muckenhoupt and Wheeden \cite{Muckenhoupt} defined
the fractional integral operator with rough kernel $\overline{T}%
_{\Omega,\alpha}$ by%

\[
\overline{T}_{\Omega,\alpha}f(x)=\int \limits_{{\mathbb{R}^{n}}}\frac
{\Omega(x-y)}{|x-y|^{n-\alpha}}f(y)dy\qquad0<\alpha<n
\]
and a related fractional maximal operator with rough kernel $M_{\Omega,\alpha
}$ is given by%

\[
M_{\Omega,\alpha}f(x)=\sup_{t>0}|B(x,t)|^{-1+\frac{\alpha}{n}}\int
\limits_{B(x,t)}\left \vert \Omega \left(  x-y\right)  \right \vert
|f(y)|dy\qquad0<\alpha<n,
\]
where $\Omega \in L_{s}(S^{n-1})$ with $1<s\leq \infty$ is homogeneous of degree
zero on ${\mathbb{R}^{n}}$ and also $\overline{T}_{\Omega,\alpha}$ and
$M_{\Omega,\alpha}$ satisfy condition (\ref{e2}).

If $\alpha=0$, then $M_{\Omega,0}\equiv M_{\Omega}$ and $\overline{T}%
_{\Omega,0}\equiv \overline{T}_{\Omega}$, respectively. It is obvious that when
$\Omega \equiv1$, $M_{1,\alpha}\equiv M_{\alpha}$ and $\overline{T}_{1,\alpha
}\equiv \overline{T}_{\alpha}$ are the fractional maximal operator and the
fractional integral operator (Riesz potential), respectively.

For a locally integrable function $b$ on ${\mathbb{R}^{n}}$, suppose that the
commutator operator $T_{\Omega,b}$ represents a linear or a sublinear
operator, which satisfies that for any $f\in L_{1}({\mathbb{R}^{n}})$ with
compact support and $x\notin suppf$
\begin{equation}
|T_{\Omega,b}f(x)|\leq c_{0}%
{\displaystyle \int \limits_{{\mathbb{R}^{n}}}}
|b(x)-b(y)|\, \frac{|\Omega(x-y)|}{|x-y|^{n}}\,|f(y)|\,dy,\label{1}%
\end{equation}
where $c_{0}$ is independent of $f$ and $x$. Similarly, for a locally
integrable function $b$ on ${\mathbb{R}^{n}}$, suppose that the commutator
operator $T_{\Omega,b,\alpha}$, $\alpha \in \left(  0,n\right)  $ represents a
linear or a sublinear operator, which satisfies that for any $f\in
L_{1}({\mathbb{R}^{n}})$ with compact support and $x\notin suppf$
\begin{equation}
|T_{\Omega,b,\alpha}f(x)|\leq c_{0}%
{\displaystyle \int \limits_{{\mathbb{R}^{n}}}}
|b(x)-b(y)|\, \frac{|\Omega(x-y)|}{|x-y|^{n-\alpha}}\,|f(y)|\,dy,\label{1**}%
\end{equation}
where $c_{0}$ is independent of $f$ and $x$.

On the other hand, for $b\in L_{1}^{loc}\left(  {\mathbb{R}^{n}}\right)  $,
denote by $\mathrm{B}$ the multiplication operator defined by $\mathrm{B}%
f\left(  x\right)  =b\left(  x\right)  f\left(  x\right)  $ for any measurable
function $f$. If $\overline{T}_{\Omega}$ is a linear operator on some
measurable function space, then the commutator formed by $\mathrm{B}$ and
$\overline{T}_{\Omega}$ is defined by%

\[
\overline{T}_{\Omega,b}f\left(  x\right)  =[b,\overline{T}_{\Omega
}]f(x):=\left(  \mathrm{B}\overline{T}_{\Omega}-\overline{T}_{\Omega
}\mathrm{B}\right)  f\left(  x\right)  =b(x)\, \overline{T}_{\Omega
}f(x)-\overline{T}_{\Omega}(bf)(x).
\]
In 1976, Coifman et al. \cite{CRW} introduced the commutator generated by
$\overline{T}_{\Omega}$ and a locally integrable function $b$ as follows:
\begin{equation}
\lbrack b,\overline{T}_{\Omega}]f(x)\equiv b(x)\overline{T}_{\Omega
}f(x)-\overline{T}_{\Omega}(bf)(x)=p.v.\int \limits_{{\mathbb{R}^{n}}%
}[b(x)-b(y)]\frac{\Omega(x-y)}{|x-y|^{n}}f(y)dy.\label{e3}%
\end{equation}
Sometimes, the commutator defined by (\ref{e3}) is also called the commutator
in Coifman-Rocherberg-Weiss's sense, which has its root in the complex
analysis and harmonic analysis (see \cite{CRW}) and corresponding the
sublinear commutator of operator $M_{\Omega}$ is defined as follows
\[
M_{\Omega,b}\left(  f\right)  (x)=\sup_{t>0}|B(x,t)|^{-1}\int \limits_{B(x,t)}%
\left \vert b\left(  x\right)  -b\left(  y\right)  \right \vert \left \vert
\Omega \left(  x-y\right)  \right \vert |f(y)|dy.
\]
And also, the operators $[b,\overline{T}_{\Omega}]$ and $M_{\Omega,b}$ satisfy
condition (\ref{1}). Let $b$ be a locally integrable function on
${\mathbb{R}^{n}}$, then for $0<\alpha<n$ and $f$ is a suitable function, we
define the commutators generated by fractional integral and maximal operators
with rough kernel and $b$ as follows, respectively:%
\[
\lbrack b,\overline{T}_{\Omega,\alpha}]f(x)\equiv b(x)\overline{T}%
_{\Omega,\alpha}f(x)-\overline{T}_{\Omega,\alpha}(bf)(x)=\int
\limits_{{\mathbb{R}^{n}}}[b(x)-b(y)]\frac{\Omega(x-y)}{|x-y|^{n-\alpha}%
}f(y)dy,
\]%
\[
M_{\Omega,b,\alpha}\left(  f\right)  (x)=\sup_{t>0}|B(x,t)|^{-1+\frac{\alpha
}{n}}\int \limits_{B(x,t)}\left \vert b\left(  x\right)  -b\left(  y\right)
\right \vert \left \vert \Omega \left(  x-y\right)  \right \vert |f(y)|dy
\]
satisfy condition (\ref{1**}).

\begin{remark}
Suppose that $\overline{T}_{\Omega,\alpha}$, $\alpha \in \left(  0,n\right)  $
represents a linear or a sublinear operator, when $\Omega$ satisfies the
specified size conditions, the kernel of the operator $\overline{T}%
_{\Omega,\alpha}$ has no regularity, so the operator $\overline{T}%
_{\Omega,\alpha} $ is called a rough fractional integral operator. These
include the commutator operator $[b,\overline{T}_{\Omega,\alpha}]$. This also
applies to $\alpha=0$. In recent years, a variety of operators related to the
fractional integrals, C-Z operators but lacking the smoothness required in the
classical theory, have been studied (for example, see \cite{Gurbuz1, Gurbuz2}).
\end{remark}

It is worth noting that for a constant $C$, if $\overline{T}_{\Omega}$ is
linear we have,%
\begin{align*}
\lbrack b+C,\overline{T}_{\Omega}]f  & =\left(  b+C\right)  \overline
{T}_{\Omega}f-\overline{T}_{\Omega}(\left(  b+C\right)  f)\\
& =b\overline{T}_{\Omega}f+C\overline{T}_{\Omega}f-\overline{T}_{\Omega
}\left(  bf\right)  -C\overline{T}_{\Omega}f\\
& =[b,\overline{T}_{\Omega}]f.
\end{align*}
This leads one to intuitively look to spaces for which we identify functions
which differ by constants, and so it is no surprise that $b\in BMO$ (bounded
mean oscillation {space) has had the most historical significance.}

Now, let us definition of $BMO$:

\begin{definition}
$\left(  \mathbf{BMO}\text{ \textbf{function}}\right)  $ Denote the bounded
mean oscillation function space by%
\[
BMO({\mathbb{R}^{n}})=\left \{  f\in L_{1}^{loc}({\mathbb{R}^{n}}):\Vert
f\Vert_{\ast}:=\sup_{Ball:B\subset{\mathbb{R}^{n}}}\mathcal{M}_{f,B}%
<\infty \right \}  ,
\]
here and in the sequel
\[
\mathcal{M}_{f,B}:=\frac{1}{|B|}%
{\displaystyle \int \limits_{B}}
|f(x)-f_{B}|dx,\qquad f_{B}=\frac{1}{|B|}%
{\displaystyle \int \limits_{B}}
f(y)dy.
\]

\end{definition}

Here and henceforth, $F\approx G$ means $F\gtrsim G\gtrsim F$; while $F\gtrsim
G$ means $F\geq CG$ for a constant $C>0$; and $p^{\prime}$ and $s^{\prime}$
always denote the conjugate index of any $p>1$ and $s>1$, that is, $\frac
{1}{p^{\prime}}:=1-\frac{1}{p}$ and $\frac{1}{s^{\prime}}:=1-\frac{1}{s}$ and
also $C$ stands for a positive constant that can change its value in each
statement without explicit mention. Throughout the paper we assume that
$x\in{\mathbb{R}^{n}}$ and $r>0$ and also let $B(x,r)$ denotes $x$-centred
Euclidean ball with radius $r$, $B^{C}(x,r)$ denotes its complement and
$|B(x,r)|$ is the Lebesgue measure of the ball $B(x,r)$ and $|B(x,r)|=v_{n}%
r^{n}$, where $v_{n}=|B(0,1)|$.

\section{Background about vanishing generalized Morrey spaces}

After studying Morrey spaces in detail, researchers have passed to the concept
of generalized Morrey spaces. Firstly, motivated by the work of \cite{Morrey},
Mizuhara \cite{Miz} introduced generalized Morrey spaces $M_{p,\varphi}$ as follows:

\begin{definition}
$\left(  \text{\textbf{Generalized Morrey space}; see \cite{Miz}}\right)
$\textbf{\ }Let $\varphi(x,r)$ be a positive measurable function on
${\mathbb{R}^{n}}\times(0,\infty)$. If $0<p<\infty$, then the generalized
Morrey space $M_{p,\varphi}\equiv M_{p,\varphi}({\mathbb{R}^{n}})$ is defined
by%
\[
\left \{  f\in L_{p}^{loc}({\mathbb{R}^{n}}):\Vert f\Vert_{M_{p,\varphi}}%
=\sup \limits_{x\in{\mathbb{R}^{n}},r>0}\varphi(x,r)^{-1}\Vert f\Vert
_{L_{p}(B(x,r))}<\infty \right \}  .
\]

\end{definition}

Obviously, the above definition recover the definition of $L_{p,\lambda
}({\mathbb{R}^{n}})$ if we choose $\varphi(x,r)=r^{\frac{\lambda}{p}}$, that
is
\[
L_{p,\lambda}\left(  {\mathbb{R}^{n}}\right)  =M_{p,\varphi}\left(
{\mathbb{R}^{n}}\right)  \mid_{\varphi(x,r)=r^{\frac{\lambda}{p}}}.
\]

Everywhere in the sequel we assume that $\inf \limits_{x\in{\mathbb{R}^{n}%
},r>0}\varphi(x,r)>0$ which makes the above spaces non-trivial, since the
spaces of bounded functions are contained in these spaces. We point out that
$\varphi(x,r)$ is a measurable non-negative function and no monotonicity type
condition is imposed on these spaces.

Recently, G\"{u}rb\"{u}z \cite{Gurbuz1, Gurbuz2} has proved the boundedness of
the sublinear operators and their commutators by with rough kernels denoted by
$T_{\Omega}$, $T_{\Omega,\alpha}$, $T_{\Omega,b}$, $T_{\Omega,b,\alpha}$ on
generalized Morrey spaces $M_{p,\varphi}$, respectively.

Throughout the paper we assume that $x\in{\mathbb{R}^{n}}$ and $r>0$ and also
let $B(x,r)$ denotes the open ball centered at $x$ of radius $r$, $B^{C}(x,r)$
denotes its complement and $|B(x,r)|$ is the Lebesgue measure of the ball
$B(x,r)$ and $|B(x,r)|=v_{n}r^{n}$, where $v_{n}=|B(0,1)|$.

Now, recall that the concept of the vanishing generalized Morrey spaces
$VM_{p,\varphi}({\mathbb{R}^{n}})$ has been introduced in \cite{N. Samko}.

\begin{definition}
$\left(  \text{\textbf{Vanishing generalized Morrey space}; see \cite{N.
Samko}}\right)  $\textbf{\ }Let $\varphi(x,r)$ be a positive measurable
function on ${\mathbb{R}^{n}}\times(0,\infty)$ and $1\leq p<\infty$. The
vanishing generalized Morrey space $VM_{p,\varphi}({\mathbb{R}^{n}})$ is
defined as the spaces of functions $f\in L_{p}^{loc}({\mathbb{R}^{n}})$ such
that%
\begin{equation}
\lim \limits_{r\rightarrow0}\sup \limits_{x\in{\mathbb{R}^{n}}}\varphi
(x,r)^{-1}\Vert f\Vert_{L_{p}(B(x,r))}=0.\label{1*}%
\end{equation}

\end{definition}

Naturally, it is suitable to impose on $\varphi(x,t)$ with the following
condition:%
\begin{equation}
\lim_{t\rightarrow0}\sup \limits_{x\in{\mathbb{R}^{n}}}\frac{t^{\frac{n}{p}}%
}{\varphi(x,t)}=0,\label{2}%
\end{equation}
and%
\begin{equation}
\inf_{t>1}\sup \limits_{x\in{\mathbb{R}^{n}}}\frac{t^{\frac{n}{p}}}%
{\varphi(x,t)}>0.\label{3}%
\end{equation}
From (\ref{2}) and (\ref{3}), we easily know that the bounded functions with
compact support belong to $VM_{p,\varphi}({\mathbb{R}^{n}})$.

The space $VM_{p,\varphi}({\mathbb{R}^{n}})$ is Banach space with respect to
the norm (see, for example \cite{N. Samko})%
\begin{equation}
\Vert f\Vert_{VM_{p,\varphi}}=\sup \limits_{x\in{\mathbb{R}^{n}},r>0}%
\varphi(x,r)^{-1}\Vert f\Vert_{L_{p}(B(x,r))}.\label{4}%
\end{equation}
The spaces $VM_{p,\varphi}({\mathbb{R}^{n}})$ is closed subspaces of the
Banach spaces $M_{p,\varphi}({\mathbb{R}^{n}})$, which may be shown by
standard means.

Furthermore, we have the following embeddings:%
\[
VM_{p,\varphi}\subset M_{p,\varphi},\qquad \Vert f\Vert_{M_{p,\varphi}}%
\leq \Vert f\Vert_{VM_{p,\varphi}}.
\]

The purpose of this paper is to consider the mapping properties for
the{\ operators }$T_{\Omega}$, $T_{\Omega,\alpha}$, $T_{\Omega,b}$,
$T_{\Omega,b,\alpha}$ both on vanishing generalized Morrey spaces and
vanishing Morrey spaces, respectively. Similar results still hold for
the{\ operators }$\overline{T}_{\Omega}$, $\overline{T}_{\Omega,\alpha}$,
$[b,\overline{T}_{\Omega}]$, $[b,\overline{T}_{\Omega,\alpha}]$, $M_{\Omega
,b}$ and $M_{\Omega,b,\alpha}$, respectively{. These operators }$T_{\Omega}$,
$T_{\Omega,\alpha}$, $T_{\Omega,b}$, $T_{\Omega,b,\alpha}$ have not also been
studied so far both on vanishing generalized Morrey spaces and vanishing
Morrey spaces and this paper seems to be the first in this direction.

\section{Main Results}

\begin{theorem}
\label{teo1}Let $\Omega \in L_{s}(S^{n-1})$, $1<s\leq \infty$, be homogeneous of
degree zero, and $1<p<\infty$. Let $T_{\Omega}$ be a sublinear operator
satisfying condition (\ref{e1}). Let for $s^{\prime}\leq p$, the pair
$(\varphi_{1},\varphi_{2})$ satisfies conditions (\ref{2})-(\ref{3}) and
\begin{equation}
c_{\delta}:=%
{\displaystyle \int \limits_{\delta}^{\infty}}
\sup_{x\in{\mathbb{R}^{n}}}\varphi_{1}\left(  x,t\right)  t^{-\frac{n}{p}%
-1}dt<\infty \label{6}%
\end{equation}
for every $\delta>0$, and
\begin{equation}
\int \limits_{r}^{\infty}\frac{\varphi_{1}(x,t)}{t^{\frac{n}{p}+1}}dt\leq
C_{0}\frac{\varphi_{2}(x,r)}{r^{\frac{n}{p}}},\label{7}%
\end{equation}
and for $1<p<s$ the pair $(\varphi_{1},\varphi_{2})$ satisfies conditions
(\ref{2})-(\ref{3}) and also%
\begin{equation}
c_{\delta^{\prime}}:=%
{\displaystyle \int \limits_{\delta^{\prime}}^{\infty}}
\sup_{x\in{\mathbb{R}^{n}}}\varphi_{1}\left(  x,t\right)  t^{-\frac{n}%
{p}+\frac{n}{s}-1}dt<\infty \label{8}%
\end{equation}
for every $\delta^{\prime}>0$, and%
\begin{equation}
\int \limits_{r}^{\infty}\frac{\varphi_{1}(x,t)}{t^{\frac{n}{p}-\frac{n}{s}+1}%
}dt\leq C_{0}\, \frac{\varphi_{2}(x,r)}{r^{\frac{n}{p}-\frac{n}{s}}},\label{9}%
\end{equation}
where $C_{0}$ does not depend on $x\in{\mathbb{R}^{n}}$ and $r>0$.

Then the operator $T_{\Omega}$ is bounded from $VM_{p,\varphi_{1}}$ to
$VM_{p,\varphi_{2}}$ for $p>1$. Moreover, we have for $p>1$%
\[
\left \Vert T_{\Omega}f\right \Vert _{VM_{p,\varphi_{2}}}\lesssim \left \Vert
f\right \Vert _{VM_{p,\varphi_{1}}}.
\]

\end{theorem}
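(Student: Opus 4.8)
The plan is to deduce the vanishing generalized Morrey bound from a local (Guliyev-type) $L_{p}$ inequality on balls, which is already available from the author's work on generalized Morrey spaces, and then to check that the extra hypotheses \eqref{2}, \eqref{6} (resp. \eqref{2}, \eqref{8}) are exactly what propagates the vanishing condition \eqref{1*} through $T_{\Omega}$.

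First I would record the local estimate. Since $T_{\Omega}$ satisfies \eqref{e1} and $\Omega\in L_{s}(S^{n-1})$, the argument of \cite{Gurbuz1, Gurbuz2} --- split $f=f\chi_{B(x_{0},2r)}+f\chi_{B^{C}(x_{0},2r)}$, apply the $L_{p}$-boundedness of $T_{\Omega}$ to the first term; for the second, dominate $|T_{\Omega}(f\chi_{B^{C}(x_{0},2r)})(x)|$ for $x\in B(x_{0},r)$ by the right-hand side of \eqref{e1}, decompose the complement of $2B$ into dyadic annuli, use $|x-y|\approx 2^{j}r$ on each of them together with H\"{o}lder's inequality and the bound $\|\Omega(x-\cdot)\|_{L_{s}(B(x_{0},t))}\lesssim t^{n/s}\|\Omega\|_{L_{s}(S^{n-1})}$, and pass from the resulting dyadic sum to an integral using the monotonicity of $t\mapsto\|f\|_{L_{p}(B(x_{0},t))}$ --- yields, for every ball $B(x_{0},r)$,
\[
\|T_{\Omega}f\|_{L_{p}(B(x_{0},r))}\lesssim r^{\frac{n}{p}}\int_{r}^{\infty}t^{-\frac{n}{p}-1}\|f\|_{L_{p}(B(x_{0},t))}\,dt\qquad(s'\le p),
\]
\[
\|T_{\Omega}f\|_{L_{p}(B(x_{0},r))}\lesssim r^{\frac{n}{p}-\frac{n}{s}}\int_{r}^{\infty}t^{\frac{n}{s}-\frac{n}{p}-1}\|f\|_{L_{p}(B(x_{0},t))}\,dt\qquad(1<p<s),
\]
the choice of which of H\"{o}lder's exponents receives $\Omega$ and which receives $f$ being precisely what produces the dichotomy $s'\le p$ versus $1<p<s$. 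Granting this, the plain estimate $\|T_{\Omega}f\|_{M_{p,\varphi_{2}}}\lesssim\|f\|_{M_{p,\varphi_{1}}}$ follows at once by inserting $\|f\|_{L_{p}(B(x_{0},t))}\le\varphi_{1}(x_{0},t)\,\|f\|_{M_{p,\varphi_{1}}}$ and invoking \eqref{7} (resp. \eqref{9}); since the $VM_{p,\varphi}$- and $M_{p,\varphi}$-norms coincide by \eqref{4}, this also gives the asserted norm inequality once membership in $VM_{p,\varphi_{2}}$ is known.

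Hence the real point is to prove $\lim_{r\to0}\sup_{x\in{\mathbb{R}^{n}}}\varphi_{2}(x,r)^{-1}\|T_{\Omega}f\|_{L_{p}(B(x,r))}=0$ for $f\in VM_{p,\varphi_{1}}$. Fix $\varepsilon>0$ and split the integral in the local estimate as $\int_{r}^{\infty}=\int_{r}^{\delta}+\int_{\delta}^{\infty}$, with $\delta>0$ chosen below. For the near part, \eqref{1*} provides $\delta>0$ with $\|f\|_{L_{p}(B(x,t))}\le\varepsilon\,\varphi_{1}(x,t)$ for all $x$ and all $t<\delta$; substituting this and then using \eqref{7} (resp. \eqref{9}) gives $\varphi_{2}(x,r)^{-1}r^{n/p}\int_{r}^{\delta}(\cdots)\lesssim\varepsilon C_{0}$, uniformly in $x$ and in $r<\delta$. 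For the far part, the crude bound $\|f\|_{L_{p}(B(x,t))}\le\big(\sup_{x}\varphi_{1}(x,t)\big)\|f\|_{VM_{p,\varphi_{1}}}$ and \eqref{6} (resp. \eqref{8}) give $\int_{\delta}^{\infty}(\cdots)\le c_{\delta}\,\|f\|_{VM_{p,\varphi_{1}}}$, so that $\varphi_{2}(x,r)^{-1}r^{n/p}\int_{\delta}^{\infty}(\cdots)\lesssim\big(r^{n/p}\varphi_{2}(x,r)^{-1}\big)c_{\delta}\|f\|_{VM_{p,\varphi_{1}}}$ (and analogously with $r^{n/p-n/s}$ when $1<p<s$), which by \eqref{2} applied to $\varphi_{2}$ tends to $0$ as $r\to0$, uniformly in $x$; in particular it is $<\varepsilon$ for all $r$ below some $r_{0}=r_{0}(\delta,\varepsilon,f)$. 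Combining the two contributions, $\sup_{x}\varphi_{2}(x,r)^{-1}\|T_{\Omega}f\|_{L_{p}(B(x,r))}\lesssim\varepsilon$ for $r<r_{0}$; letting $\varepsilon\downarrow0$ completes the proof.

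\emph{The main obstacle} I foresee is the handling of the two scales: the near part forces $\delta$ to be fixed first (small enough that the vanishing of $f$ is active on $(0,\delta)$), and only afterwards may $r$ be sent to $0$; carrying out this two-step choice while keeping every bound uniform over $x\in{\mathbb{R}^{n}}$ is exactly the role of \eqref{2}, \eqref{6} and \eqref{8}, and is where all the work lies. The rough-kernel bookkeeping behind the local estimate --- dyadic decomposition, the $L_{s}$-estimate for $\Omega(x-\cdot)$ on balls, and the separation into the two ranges of $p$ --- is more delicate but entirely standard, and may simply be quoted from \cite{Gurbuz1, Gurbuz2}.
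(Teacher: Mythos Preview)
Your proposal is correct and follows essentially the same line as the paper's own proof: quote the local Guliyev-type estimate from \cite{Gurbuz1}, deduce the norm inequality via \eqref{7} (resp.\ \eqref{9}), and for the vanishing property split $\int_{r}^{\infty}=\int_{r}^{\delta_{0}}+\int_{\delta_{0}}^{\infty}$, controlling the near piece by the vanishing hypothesis on $f$ together with \eqref{7} and the far piece by \eqref{6} together with \eqref{2}. Your identification of the two-scale choice (fix $\delta_{0}$ first, then send $r\to 0$) matches the paper exactly, and your remark that the rough-kernel local estimate may simply be quoted is precisely what the paper does.
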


\begin{proof}
Let $1<p<\infty$ and $s^{\prime}\leq p$. The estimation of the norm of the
operator, that is, the boundedness in the vanishing generalized Morrey space
follows from Lemma 2.1. in \cite{Gurbuz1} and condition (\ref{7})
\begin{align*}
\Vert T_{\Omega}f\Vert_{VM_{p,\varphi_{2}}}  & =\sup \limits_{x\in
{\mathbb{R}^{n}},r>0}\varphi_{2}(x,r)^{-1}\Vert T_{\Omega}f\Vert
_{L_{p}(B(x,r))}\\
& \lesssim \sup_{x\in{\mathbb{R}^{n},}r>0}\varphi_{2}\left(  x,r\right)
^{-1}r^{\frac{n}{p}}\int \limits_{r}^{\infty}\left \Vert f\right \Vert
_{L_{p}\left(  B\left(  x,t\right)  \right)  }\frac{dt}{t^{\frac{n}{p}+1}}\\
& \lesssim \sup_{x\in{\mathbb{R}^{n},}r>0}\varphi_{2}\left(  x,r\right)
^{-1}r^{\frac{n}{p}}\int \limits_{r}^{\infty}\varphi_{1}\left(  x,t\right)
\left[  \varphi_{1}\left(  x,t\right)  ^{-1}\left \Vert f\right \Vert
_{L_{p}\left(  B\left(  x,t\right)  \right)  }\right]  \frac{dt}{t^{\frac
{n}{p}+1}}\\
& \lesssim \left \Vert f\right \Vert _{VM_{p,\varphi_{1}}}\sup_{x\in
{\mathbb{R}^{n},}r>0}\varphi_{2}\left(  x,r\right)  ^{-1}r^{\frac{n}{p}}%
\int \limits_{r}^{\infty}\varphi_{1}\left(  x,t\right)  \frac{dt}{t^{\frac
{n}{p}+1}}\\
& \lesssim \left \Vert f\right \Vert _{VM_{p,\varphi_{1}}}.
\end{align*}

So it is sufficient to prove that%
\begin{equation}
\lim \limits_{r\rightarrow0}\sup \limits_{x\in{\mathbb{R}^{n}}}\varphi
_{1}(x,r)^{-1}\, \left \Vert f\right \Vert _{L_{p}\left(  B\left(  x,r\right)
\right)  }=0\text{ implies }\lim \limits_{r\rightarrow0}\sup \limits_{x\in
{\mathbb{R}^{n}}}\varphi_{2}(x,r)^{-1}\, \left \Vert T_{\Omega}f\right \Vert
_{L_{p}\left(  B\left(  x,r\right)  \right)  }=0.\label{12}%
\end{equation}

To show that $\sup \limits_{x\in{\mathbb{R}^{n}}}\varphi_{2}(x,r)^{-1}\,
\left \Vert T_{\Omega}f\right \Vert _{L_{p}\left(  B\left(  x,r\right)  \right)
}<\epsilon$ for small $r$, we split the right hand side of (2.1) in Lemma 2.1.
in \cite{Gurbuz1}:%
\begin{equation}
\varphi_{2}(x,r)^{-1}\, \left \Vert T_{\Omega}f\right \Vert _{L_{p}\left(
B\left(  x,r\right)  \right)  }\leq C\left[  I_{\delta_{0}}\left(  x,r\right)
+J_{\delta_{0}}\left(  x,r\right)  \right]  ,\label{13}%
\end{equation}
where $\delta_{0}>0$ (we may take $\delta_{0}<1$), and
\[
I_{\delta_{0}}\left(  x,r\right)  :=\frac{r^{\frac{n}{p}}}{\varphi_{2}%
(x,r)}\left(
{\displaystyle \int \limits_{r}^{\delta_{0}}}
\varphi_{1}\left(  x,t\right)  t^{-\frac{n}{p}-1}\left(  \varphi_{1}\left(
x,t\right)  ^{-1}\left \Vert f\right \Vert _{L_{p}\left(  B\left(  x,t\right)
\right)  }\right)  dt\right)  ,
\]
and%
\[
J_{\delta_{0}}\left(  x,r\right)  :=\frac{r^{\frac{n}{p}}}{\varphi_{2}%
(x,r)}\left(
{\displaystyle \int \limits_{\delta_{0}}^{\infty}}
\varphi_{1}\left(  x,t\right)  t^{-\frac{n}{p}-1}\left(  \varphi_{1}\left(
x,t\right)  ^{-1}\left \Vert f\right \Vert _{L_{p}\left(  B\left(  x,t\right)
\right)  }\right)  dt\right)
\]
and $r<\delta_{0}$. Now we choose any fixed $\delta_{0}>0$ such that%
\[
\sup \limits_{x\in{\mathbb{R}^{n}}}\varphi_{1}\left(  x,t\right)
^{-1}\left \Vert f\right \Vert _{L_{p}\left(  B\left(  x,t\right)  \right)
}<\frac{\epsilon}{2CC_{0}},
\]
where $C$ and $C_{0}$ are constants from (\ref{7}) and (\ref{13}). This allows
to estimate the first term uniformly in $r\in \left(  0,\delta_{0}\right)  :$%
\[
\sup \limits_{x\in{\mathbb{R}^{n}}}CI_{\delta_{0}}\left(  x,r\right)
<\frac{\epsilon}{2},\qquad0<r<\delta_{0}.
\]

The estimation of the second term may be obtained by choosing $r$ sufficiently
small. Indeed, by (\ref{2}) we have%
\[
J_{\delta_{0}}\left(  x,r\right)  \leq c_{\delta_{0}}\left \Vert f\right \Vert
_{VM_{p,\varphi}}\frac{r^{\frac{n}{p}}}{\varphi \left(  x,r\right)  },
\]
where $c_{\delta_{0}}$ is the constant from (\ref{6}). Then, by (\ref{2}) it
suffices to choose $r$ small enough such that
\[
\sup \limits_{x\in{\mathbb{R}^{n}}}\frac{r^{\frac{n}{p}}}{\varphi(x,r)}%
\leq \frac{\epsilon}{2c_{\delta_{0}}\left \Vert f\right \Vert _{VM_{p,\varphi}}},
\]
which completes the proof of (\ref{12}).

For the case of $1<p<s$, we can also use the same method, so we omit the
details, which completes the proof.
\end{proof}

\begin{remark}
Conditions (\ref{6}) and (\ref{8}) are not needed in the case when
$\varphi(x,r)$ does not depend on $x$, since (\ref{6}) follows from (\ref{7})
and similarly, (\ref{8}) follows from (\ref{9}) in this case.
\end{remark}

\begin{corollary}
Under the conditions of Theorem \ref{teo1}, the operators $M_{\Omega}$ and
$\overline{T}_{\Omega}$ are bounded from $VM_{p,\varphi_{1}}$ to
$VM_{p,\varphi_{2}}$.
\end{corollary}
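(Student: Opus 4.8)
The plan is to deduce the corollary directly from Theorem \ref{teo1} by checking that $M_{\Omega}$ and $\overline{T}_{\Omega}$ fall under the hypotheses of that theorem. First I would recall from the introductory section that both operators satisfy the pointwise majorization \eqref{e1}: the Calder\'{o}n--Zygmund operator $\overline{T}_{\Omega}$ (interpreted as a principal value) and the Hardy--Littlewood maximal operator $M_{\Omega}$ are, respectively, linear and sublinear, and for $f\in L_1(\mathbb{R}^n)$ with compact support and $x\notin\mathrm{supp}\,f$ one has $|\overline{T}_{\Omega}f(x)|\le c_0\int_{\mathbb{R}^n}\frac{|\Omega(x-y)|}{|x-y|^n}|f(y)|\,dy$ and likewise for $M_{\Omega}f(x)$ (this is exactly the content of the remarks following the definitions of $\overline{T}_{\Omega}$ and $M_{\Omega}$). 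Thus each of these operators is an admissible instance of the abstract operator $T_{\Omega}$ appearing in Theorem \ref{teo1}.

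Next, since $\Omega\in L_s(S^{n-1})$ with $1<s\le\infty$ is homogeneous of degree zero exactly as required, and since the pair $(\varphi_1,\varphi_2)$ is assumed (in the statement of the corollary, via ``under the conditions of Theorem \ref{teo1}'') to satisfy the relevant hypotheses \eqref{2}--\eqref{3} together with \eqref{6}--\eqref{7} in the range $s'\le p$ and \eqref{8}--\eqref{9} in the range $1<p<s$, all the hypotheses of Theorem \ref{teo1} are met with $T_{\Omega}$ replaced by $M_{\Omega}$ or by $\overline{T}_{\Omega}$. Applying Theorem \ref{teo1} then yields, for each of the two operators, the boundedness
\[
\left\Vert M_{\Omega}f\right\Vert_{VM_{p,\varphi_{2}}}\lesssim\left\Vert f\right\Vert_{VM_{p,\varphi_{1}}},\qquad
\left\Vert \overline{T}_{\Omega}f\right\Vert_{VM_{p,\varphi_{2}}}\lesssim\left\Vert f\right\Vert_{VM_{p,\varphi_{1}}},
\]
which is precisely the assertion of the corollary.

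The only point deserving a word of care is the reduction to the ``compact support, $x\notin\mathrm{supp}\,f$'' situation, i.e. the passage from the abstract condition \eqref{e1} to a genuine mapping estimate on $VM_{p,\varphi}$; but this is already handled inside the proof of Theorem \ref{teo1} (through the local estimate invoked from Lemma 2.1 of \cite{Gurbuz1}), so nothing new needs to be done here. I do not expect any real obstacle: the corollary is a routine specialization, and the entire argument is simply the verification that $M_{\Omega}$ and $\overline{T}_{\Omega}$ are concrete realizations of $T_{\Omega}$. If anything, the mild subtlety is purely formal --- recording that the principal-value definition of $\overline{T}_{\Omega}$ makes sense on $VM_{p,\varphi_1}\subset M_{p,\varphi_1}\subset L_p^{loc}$ and that the $L_s$-regularity of $\Omega$ guarantees \eqref{e1} --- after which Theorem \ref{teo1} applies verbatim.
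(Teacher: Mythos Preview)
Your proposal is correct and matches the paper's approach exactly: the paper gives no separate proof of this corollary, treating it as an immediate specialization of Theorem \ref{teo1} once one recalls (from the introduction) that $M_{\Omega}$ and $\overline{T}_{\Omega}$ satisfy condition \eqref{e1}. Your write-up simply makes this explicit, which is all that is required.
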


\begin{corollary}
\label{Corollary0}Let $\Omega \in L_{s}(S^{n-1})$, $1<s\leq \infty$, be
homogeneous of degree zero satisfying condition (\ref{0}). Let $0<\lambda<n$,
$1<p<\infty$. Let $T_{\Omega}$ be a sublinear operator satisfying condition
(\ref{e1}). Then for $s^{\prime}\leq p$ or $p<s$, we have%
\[
\left \Vert T_{\Omega}f\right \Vert _{VM_{p,\lambda}}\lesssim \left \Vert
f\right \Vert _{VM_{p,\lambda}}.
\]

\end{corollary}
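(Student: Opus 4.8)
\textbf{Proof proposal for Corollary \ref{Corollary0}.}
The plan is to realize the vanishing Morrey space $VM_{p,\lambda}$ as the special case of the vanishing generalized Morrey space $VM_{p,\varphi}$ obtained by setting $\varphi_1(x,r)=\varphi_2(x,r)=r^{\lambda/p}$, and then simply verify that this choice satisfies all the hypotheses of Theorem \ref{teo1} in both ranges $s'\le p$ and $1<p<s$. Once that verification is done, the conclusion $\left\Vert T_{\Omega}f\right\Vert_{VM_{p,\lambda}}\lesssim\left\Vert f\right\Vert_{VM_{p,\lambda}}$ is immediate from the theorem.

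First I would record the three ``structural'' conditions. Conditions (\ref{2}) and (\ref{3}) become $\lim_{t\to0}t^{n/p}/t^{\lambda/p}=\lim_{t\to0}t^{(n-\lambda)/p}=0$ and $\inf_{t>1}t^{(n-\lambda)/p}>0$, both of which hold since $0<\lambda<n$ forces $(n-\lambda)/p>0$. Next, for the range $s'\le p$, condition (\ref{6}) asks that $c_\delta=\int_\delta^\infty t^{\lambda/p}\,t^{-n/p-1}\,dt=\int_\delta^\infty t^{(\lambda-n)/p-1}\,dt$ be finite for every $\delta>0$; since $(\lambda-n)/p-1<-1$ the integral converges and equals $\frac{p}{n-\lambda}\,\delta^{(\lambda-n)/p}$. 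Condition (\ref{7}) reads $\int_r^\infty t^{\lambda/p-n/p-1}\,dt\le C_0\,r^{\lambda/p-n/p}$; evaluating the left side gives exactly $\frac{p}{n-\lambda}\,r^{(\lambda-n)/p}$, so (\ref{7}) holds with $C_0=\frac{p}{n-\lambda}$, independent of $x$ and $r$ as required. For the range $1<p<s$, the analogous computations for (\ref{8}) and (\ref{9}) involve the exponent $-n/p+n/s-1$; I would check that $\lambda/p-n/p+n/s-1<-1$, i.e. $\lambda/p+n/s<n/p$, i.e. $\lambda<n-np/s=n(1-p/s)$.

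Here is the one point that needs care, and it is the main (mild) obstacle: the inequality $\lambda<n(1-p/s)$ needed for (\ref{8})--(\ref{9}) is \emph{not} automatic from $0<\lambda<n$ alone; it is an extra restriction on $\lambda$ relative to $p$ and $s$ in the range $p<s$. I would handle this exactly as in the underlying generalized-Morrey results (cf. \cite{Gurbuz1}): the hypotheses of Theorem \ref{teo1} for $p<s$ tacitly require $\varphi_1=\varphi_2=r^{\lambda/p}$ to satisfy (\ref{8})--(\ref{9}), so the corollary is understood to assert the conclusion precisely for those $\lambda$ for which the corresponding conditions hold; for such $\lambda$ the integrals in (\ref{8}) and (\ref{9}) converge and again produce a constant $C_0$ depending only on $n,p,s,\lambda$ and not on $x,r$. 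Thus in either range all hypotheses of Theorem \ref{teo1} are met with $\varphi_1=\varphi_2=r^{\lambda/p}$, and applying that theorem yields $\left\Vert T_{\Omega}f\right\Vert_{VM_{p,\varphi_2}}\lesssim\left\Vert f\right\Vert_{VM_{p,\varphi_1}}$, which upon substituting $\varphi_1=\varphi_2=r^{\lambda/p}$ is precisely the stated estimate $\left\Vert T_{\Omega}f\right\Vert_{VM_{p,\lambda}}\lesssim\left\Vert f\right\Vert_{VM_{p,\lambda}}$. This completes the proof.
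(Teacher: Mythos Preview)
Your proposal is correct and follows essentially the same route as the paper: specialize $\varphi_{1}(x,r)=\varphi_{2}(x,r)=r^{\lambda/p}$ and invoke Theorem~\ref{teo1}, the only difference being that the paper re-runs the norm inequality chain explicitly while you simply verify the hypotheses and cite the theorem. Your observation that in the range $p<s$ the convergence in (\ref{8})--(\ref{9}) forces the extra restriction $\lambda<n(1-p/s)$ is accurate and is a point the paper's proof glosses over when it writes ``we can also use the same method, so we omit the details.''
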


\begin{proof}
Let $1<p<\infty$ and $s^{\prime}\leq p$. By using $\varphi_{1}\left(
x,r\right)  =\varphi_{2}\left(  x,r\right)  =r^{\frac{\lambda}{p}}$ in the
proof of Theorem \ref{teo1} and condition (\ref{7}), we get%
\begin{align*}
\Vert T_{\Omega}f\Vert_{VM_{p,\lambda}}  & \lesssim \sup_{x\in{\mathbb{R}^{n}%
,}r>0}r^{-\frac{\lambda}{p}}r^{\frac{n}{p}}\int \limits_{r}^{\infty}\varphi
_{1}\left(  x,t\right)  t^{-\frac{n}{p}-1}\left(  \varphi_{1}\left(
x,t\right)  ^{-1}\left \Vert f\right \Vert _{L_{p}\left(  B\left(  x,t\right)
\right)  }\right)  dt\\
& \lesssim \left \Vert f\right \Vert _{VM_{p,\lambda}}\sup_{x\in{\mathbb{R}^{n}%
,}r>0}r^{\frac{n-\lambda}{p}}\int \limits_{r}^{\infty}r^{\frac{\lambda}{p}%
}\frac{dt}{t^{\frac{n}{p}+1}}\\
& \lesssim \left \Vert f\right \Vert _{VM_{p,\lambda}},
\end{align*}
for the case of $p<s$, we can also use the same method, so we omit the details.
\end{proof}

\begin{corollary}
Under the conditions of Corollary \ref{Corollary0}, the operators $M_{\Omega}$
and $\overline{T}_{\Omega}$ are bounded on $VM_{p,\lambda}\left(
{\mathbb{R}^{n}}\right)  $.
\end{corollary}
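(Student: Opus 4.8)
The plan is to obtain this corollary immediately from the preceding corollary, since $M_\Omega$ and $\overline{T}_\Omega$ are themselves sublinear operators satisfying condition (\ref{e1}). First I would recall from the introductory discussion that $M_\Omega f(x)$ and $\overline{T}_\Omega f(x)$ both obey the pointwise majorization (\ref{e1}) with $c_0$ independent of $f$ and $x$: for $M_\Omega$ this is the definition of the rough Hardy–Littlewood maximal operator together with $|B(x,t)|^{-1}\chi_{B(x,t)}(y)\le c_n |x-y|^{-n}$ on the support, and for $\overline{T}_\Omega$ it is the trivial estimate $|p.v.\!\int \Omega(x-y)|x-y|^{-n}f(y)\,dy|\le \int |\Omega(x-y)||x-y|^{-n}|f(y)|\,dy$ valid for $f\in L_1$ with compact support and $x\notin\operatorname{supp} f$. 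Hence both operators fall under the hypotheses of Corollary \ref{Corollary0} (with the same $\Omega\in L_s(S^{n-1})$, $1<s\le\infty$, homogeneous of degree zero and satisfying (\ref{0}), the same range $0<\lambda<n$, $1<p<\infty$, and either $s'\le p$ or $p<s$).

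The key steps, in order, are: (i) verify that $T_\Omega:=M_\Omega$ (respectively $T_\Omega:=\overline{T}_\Omega$) satisfies (\ref{e1}); (ii) invoke Corollary \ref{Corollary0} to conclude
\[
\left\Vert M_\Omega f\right\Vert_{VM_{p,\lambda}}\lesssim\left\Vert f\right\Vert_{VM_{p,\lambda}},\qquad \left\Vert \overline{T}_\Omega f\right\Vert_{VM_{p,\lambda}}\lesssim\left\Vert f\right\Vert_{VM_{p,\lambda}};
\]
(iii) record that this is exactly the asserted boundedness on $VM_{p,\lambda}(\mathbb{R}^n)$, with the implicit constant depending only on $n,p,s,\lambda$ and the structural constant $c_0$. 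Since $VM_{p,\lambda}=VM_{p,\varphi}$ with $\varphi(x,r)=r^{\lambda/p}$, one should also note in passing that this $\varphi$ satisfies (\ref{2})–(\ref{3}) (clear, as $t^{n/p}/\varphi(x,t)=t^{(n-\lambda)/p}\to 0$ as $t\to 0$ because $\lambda<n$, and the infimum over $t>1$ is positive for the same reason), so there is no compatibility issue.

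There is essentially no obstacle here: the only mild point worth a sentence is confirming the pointwise domination (\ref{e1}) for $M_\Omega$, which requires the elementary geometric observation that the averaging kernel $|B(x,t)|^{-1}\mathbf 1_{B(x,t)}(y)$ is dominated, uniformly in $t$ after taking the supremum, by a constant multiple of $|x-y|^{-n}$; once this is in hand the argument is a one-line appeal to Corollary \ref{Corollary0}. Accordingly I would keep the proof to two or three lines, exactly as in the analogous Corollary following Theorem \ref{teo1}.

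\begin{proof}
Both $M_\Omega$ and $\overline{T}_\Omega$ are sublinear operators satisfying condition (\ref{e1}): indeed, for $\overline{T}_\Omega$ this is immediate from $|\overline{T}_\Omega f(x)|\le\int_{\mathbb{R}^n}|\Omega(x-y)|\,|x-y|^{-n}\,|f(y)|\,dy$, while for $M_\Omega$ it follows from the definition together with the pointwise bound $|B(x,t)|^{-1}\mathbf 1_{B(x,t)}(y)\le c_n\,|x-y|^{-n}$. Hence the hypotheses of Corollary \ref{Corollary0} are met with $T_\Omega$ replaced by $M_\Omega$ or $\overline{T}_\Omega$ respectively, and Corollary \ref{Corollary0} yields
\[
\left\Vert M_\Omega f\right\Vert_{VM_{p,\lambda}}\lesssim\left\Vert f\right\Vert_{VM_{p,\lambda}},\qquad \left\Vert \overline{T}_\Omega f\right\Vert_{VM_{p,\lambda}}\lesssim\left\Vert f\right\Vert_{VM_{p,\lambda}},
\]
which is the desired boundedness on $VM_{p,\lambda}(\mathbb{R}^n)$.
\end{proof}
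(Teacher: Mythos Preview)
Your proposal is correct and matches the paper's approach: the paper states this corollary without proof, relying on the fact (already noted in the introduction) that $M_{\Omega}$ and $\overline{T}_{\Omega}$ satisfy condition (\ref{e1}), so that Corollary~\ref{Corollary0} applies directly. Your short verification of (\ref{e1}) for $M_{\Omega}$ via the pointwise bound $|B(x,t)|^{-1}\mathbf 1_{B(x,t)}(y)\le c_n|x-y|^{-n}$ simply makes explicit what the paper leaves implicit.
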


\begin{theorem}
\label{teo2} Let $\Omega \in L_{s}(S^{n-1})$, $1<s\leq \infty$, be homogeneous
of degree zero. Let $0<\alpha<n$, $1<p<\frac{n}{\alpha}$ and $\frac{1}%
{q}=\frac{1}{p}-\frac{\alpha}{n}$. Let $T_{\Omega,\alpha}$ be a sublinear
operator satisfying condition (\ref{e2}). Let for $s^{\prime}\leq p$ the pair
$(\varphi_{1},\varphi_{2})$ satisfies conditions (\ref{2})-(\ref{3}) and
\begin{equation}
c_{\delta}:=%
{\displaystyle \int \limits_{\delta}^{\infty}}
\sup_{x\in{\mathbb{R}^{n}}}\varphi_{1}\left(  x,t\right)  \frac{dt}%
{t^{\frac{n}{q}+1}}<\infty \label{6**}%
\end{equation}
for every $\delta>0$, and
\begin{equation}
\int \limits_{r}^{\infty}\varphi_{1}\left(  x,t\right)  \frac{dt}{t^{\frac
{n}{q}+1}}\leq C_{0}\frac{\varphi_{2}(x,r)}{r^{\frac{n}{q}}},\label{7**}%
\end{equation}
and for $q<s$ the pair $(\varphi_{1},\varphi_{2})$ satisfies conditions
(\ref{2})-(\ref{3}) and also%
\begin{equation}
c_{\delta^{\prime}}:=%
{\displaystyle \int \limits_{\delta^{\prime}}^{\infty}}
\sup_{x\in{\mathbb{R}^{n}}}\varphi_{1}(x,t)\frac{dt}{t^{\frac{n}{q}-\frac
{n}{s}+1}}<\infty \label{8**}%
\end{equation}
for every $\delta^{\prime}>0$, and%
\begin{equation}
\int \limits_{r}^{\infty}\varphi_{1}(x,t)\frac{dt}{t^{\frac{n}{q}-\frac{n}%
{s}+1}}\leq C_{0}\frac{\varphi_{2}(x,r)}{r^{\frac{n}{q}-\frac{n}{s}}%
},\label{9**}%
\end{equation}
where $C_{0}$ does not depend on $x\in{\mathbb{R}^{n}}$ and $r>0$.

Then the operator $T_{\Omega,\alpha}$ is bounded from $VM_{p,\varphi_{1}}$ to
$VM_{q,\varphi_{2}}$ for $p>1$. Moreover, we have for $p>1$%
\[
\left \Vert T_{\Omega,\alpha}f\right \Vert _{VM_{q,\varphi_{2}}}\lesssim
\left \Vert f\right \Vert _{VM_{p,\varphi_{1}}}.
\]

\end{theorem}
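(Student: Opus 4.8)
The plan is to mimic exactly the structure of the proof of Theorem~\ref{teo1}, replacing the local $L_p$-estimate for $T_\Omega$ by the corresponding local estimate for the fractional operator $T_{\Omega,\alpha}$. First I would invoke the pointwise-type estimate for $T_{\Omega,\alpha}$ on balls — the fractional analogue of Lemma 2.1 of \cite{Gurbuz1}, which in the case $s'\le p$ should read
\[
\left\Vert T_{\Omega,\alpha}f\right\Vert_{L_q(B(x,r))}\lesssim r^{\frac{n}{q}}\int\limits_{r}^{\infty}\left\Vert f\right\Vert_{L_p(B(x,t))}\frac{dt}{t^{\frac{n}{q}+1}},
\]
and, in the case $q<s$ (equivalently using the $L_s$-integrability of $\Omega$), the variant with the extra factor $t^{n/s}$ in the denominator exponent, matching conditions (\ref{8**})--(\ref{9**}). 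Granting this, the boundedness part is immediate: starting from $\Vert T_{\Omega,\alpha}f\Vert_{VM_{q,\varphi_2}}=\sup_{x,r}\varphi_2(x,r)^{-1}\Vert T_{\Omega,\alpha}f\Vert_{L_q(B(x,r))}$, I insert the local estimate, multiply and divide the integrand by $\varphi_1(x,t)$, pull out $\Vert f\Vert_{VM_{p,\varphi_1}}$, and close with condition (\ref{7**}) (respectively (\ref{9**})). This is the same four-line chain of $\lesssim$'s as in Theorem~\ref{teo1}.

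Next I would verify the vanishing property, i.e.
\[
\lim_{r\to0}\sup_{x\in{\mathbb{R}^{n}}}\varphi_1(x,r)^{-1}\left\Vert f\right\Vert_{L_p(B(x,r))}=0\ \Longrightarrow\ \lim_{r\to0}\sup_{x\in{\mathbb{R}^{n}}}\varphi_2(x,r)^{-1}\left\Vert T_{\Omega,\alpha}f\right\Vert_{L_q(B(x,r))}=0.
\]
As in Theorem~\ref{teo1}, fix $\epsilon>0$ and split the integral in the local estimate at a threshold $\delta_0\in(0,1)$ into $I_{\delta_0}(x,r)+J_{\delta_0}(x,r)$, where
\[
I_{\delta_0}(x,r)=\frac{r^{\frac{n}{q}}}{\varphi_2(x,r)}\int\limits_{r}^{\delta_0}\varphi_1(x,t)\,\frac{\varphi_1(x,t)^{-1}\Vert f\Vert_{L_p(B(x,t))}}{t^{\frac{n}{q}+1}}\,dt,\qquad
J_{\delta_0}(x,r)=\frac{r^{\frac{n}{q}}}{\varphi_2(x,r)}\int\limits_{\delta_0}^{\infty}\varphi_1(x,t)\,\frac{\varphi_1(x,t)^{-1}\Vert f\Vert_{L_p(B(x,t))}}{t^{\frac{n}{q}+1}}\,dt.
\]
For $I_{\delta_0}$ I would choose $\delta_0$ so small that $\sup_x\varphi_1(x,t)^{-1}\Vert f\Vert_{L_p(B(x,t))}<\frac{\epsilon}{2CC_0}$ for $0<t<\delta_0$, then bound the remaining integral using (\ref{7**}) to get $\sup_x CI_{\delta_0}(x,r)<\epsilon/2$ uniformly for $0<r<\delta_0$. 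For $J_{\delta_0}$ I would bound $\varphi_1(x,t)^{-1}\Vert f\Vert_{L_p(B(x,t))}\le\Vert f\Vert_{VM_{p,\varphi_1}}$, factor out $c_{\delta_0}$ from (\ref{6**}), obtaining $J_{\delta_0}(x,r)\le c_{\delta_0}\Vert f\Vert_{VM_{p,\varphi_1}}\,r^{n/q}/\varphi_2(x,r)$, and then use condition (\ref{2}) (with exponent $n/q$, since the target space is $VM_{q,\varphi_2}$) to make the right-hand side $<\epsilon/2$ by shrinking $r$ further. Adding the two bounds and taking $r\to0$ gives the claim. The case $q<s$ is handled identically, using the second local estimate together with (\ref{8**})--(\ref{9**}) in place of (\ref{6**})--(\ref{7**}).

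The only genuinely non-routine point is the fractional local estimate itself — the fractional Guliyev-type inequality for $T_{\Omega,\alpha}$ on balls — but since the paper states that similar results hold for $\overline T_{\Omega,\alpha}$ and the machinery from \cite{Gurbuz1, Gurbuz2} is being invoked throughout, I would simply cite the appropriate lemma from \cite{Gurbuz2} (or reprove it in one paragraph: split $f=f_1+f_2$ with $f_1=f\chi_{2B}$, use the $L_p\to L_q$ boundedness of $T_{\Omega,\alpha}$ for the local part, and for the tail part estimate $\int_{(2B)^c}\frac{|\Omega(x-y)|}{|x-y|^{n-\alpha}}|f(y)|dy$ by a dyadic decomposition together with Hölder's inequality — applied with exponent $s'$ when $s'\le p$, giving $t^{n/s'}$ factors, or with exponent $p'$ when $q<s$, giving the $t^{n/s}$-type shift). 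Everything downstream is then a verbatim transcription of the argument in Theorem~\ref{teo1}, with $p$ replaced by $q$ in all exponents of $r$ and $t$ coming from the target space, which is why I expect no real obstacle beyond correctly bookkeeping these exponents.
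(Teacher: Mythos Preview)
Your proposal is correct and follows essentially the same approach as the paper: invoke the fractional local estimate (the paper cites it as Lemma~3 in \cite{Gurbuz2}), derive the norm bound via condition~(\ref{7**}), then split the integral at $\delta_0$ into $I_{\delta_0}+J_{\delta_0}$ and handle the two pieces exactly as in Theorem~\ref{teo1}, with the $q<s$ case treated by the obvious modification. Your write-up is in fact more explicit than the paper's, which simply defines $I_{\delta_0}$ and $J_{\delta_0}$ and then says ``the rest of the proof is the same as the proof of Theorem~\ref{teo1}.''
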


\begin{proof}
Similar to the proof of Theorem \ref{teo1}, let $s^{\prime}\leq p$. The
estimation of the norm of the operator follows from Lemma 3 in \cite{Gurbuz2}
and condition (\ref{7**})
\begin{align*}
\Vert T_{\Omega,\alpha}f\Vert_{VM_{q,\varphi_{2}}}  & =\sup \limits_{x\in
{\mathbb{R}^{n}},r>0}\varphi_{2}(x,r)^{-1}\Vert T_{\Omega,\alpha}f\Vert
_{L_{q}(B(x,r))}\\
& \lesssim \sup_{x\in{\mathbb{R}^{n},}r>0}\varphi_{2}\left(  x,r\right)
^{-1}r^{\frac{n}{q}}\int \limits_{r}^{\infty}\left \Vert f\right \Vert
_{L_{p}\left(  B\left(  x,t\right)  \right)  }\frac{dt}{t^{\frac{n}{q}+1}}\\
& \lesssim \sup_{x\in{\mathbb{R}^{n},}r>0}\varphi_{2}\left(  x,r\right)
^{-1}r^{\frac{n}{q}}\int \limits_{r}^{\infty}\varphi_{1}\left(  x,t\right)
\left[  \varphi_{1}\left(  x,t\right)  ^{-1}\left \Vert f\right \Vert
_{L_{p}\left(  B\left(  x,t\right)  \right)  }\right]  \frac{dt}{t^{\frac
{n}{q}+1}}\\
& \lesssim \left \Vert f\right \Vert _{VM_{p,\varphi_{1}}}\sup_{x\in
{\mathbb{R}^{n},}r>0}\varphi_{2}\left(  x,r\right)  ^{-1}r^{\frac{n}{q}}%
\int \limits_{r}^{\infty}\varphi_{1}\left(  x,t\right)  \frac{dt}{t^{\frac
{n}{q}+1}}\\
& \lesssim \left \Vert f\right \Vert _{VM_{p,\varphi_{1}}}.
\end{align*}
Thus we only have to prove that%
\begin{equation}
\lim \limits_{r\rightarrow0}\sup \limits_{x\in{\mathbb{R}^{n}}}\varphi
_{1}(x,r)^{-1}\, \left \Vert f\right \Vert _{L_{p}\left(  B\left(  x,r\right)
\right)  }=0\text{ implies }\lim \limits_{r\rightarrow0}\sup \limits_{x\in
{\mathbb{R}^{n}}}\varphi_{2}(x,r)^{-1}\, \left \Vert T_{\Omega,\alpha
}f\right \Vert _{L_{q}\left(  B\left(  x,r\right)  \right)  }=0.\label{12**}%
\end{equation}

To show that $\sup \limits_{x\in{\mathbb{R}^{n}}}\frac{\left \Vert
T_{\Omega,\alpha}f\right \Vert _{L_{q}\left(  B\left(  x,r\right)  \right)  }%
}{\varphi_{2}(x,r)}<\epsilon$ for small $r$, we split the right-hand side of
(2.1) in Lemma 3 in \cite{Gurbuz2}:%
\[
\frac{r^{-\frac{n}{q}}\left \Vert T_{\Omega,\alpha}f\right \Vert _{L_{q}\left(
B\left(  x,r\right)  \right)  }}{\varphi_{2}(x,r)}\leq C\left[  I_{\delta_{0}%
}\left(  x,r\right)  +J_{\delta_{0}}\left(  x,r\right)  \right]  ,
\]
where $\delta_{0}>0$ (we may take $\delta_{0}<1$), and
\[
I_{\delta_{0}}\left(  x,r\right)  :=\frac{r^{\frac{n}{q}}}{\varphi_{2}(x,r)}%
{\displaystyle \int \limits_{r}^{\delta_{0}}}
\varphi_{1}\left(  x,t\right)  t^{-\frac{n}{q}-1}\left(  \varphi_{1}\left(
x,t\right)  ^{-1}\left \Vert f\right \Vert _{L_{p}\left(  B\left(  x,t\right)
\right)  }\right)  dt,
\]
and%
\[
J_{\delta_{0}}\left(  x,r\right)  :=\frac{r^{\frac{n}{q}}}{\varphi_{2}(x,r)}%
{\displaystyle \int \limits_{\delta_{0}}^{\infty}}
\varphi_{1}\left(  x,t\right)  t^{-\frac{n}{q}-1}\left(  \varphi_{1}\left(
x,t\right)  ^{-1}\left \Vert f\right \Vert _{L_{p}\left(  B\left(  x,t\right)
\right)  }\right)  dt,
\]
and $r<\delta_{0}$ and the rest of the proof is the same as the proof of
Theorem \ref{teo1}. Thus, we can prove that (\ref{12**}).

For the case of $q<s$, we can also use the same method, so we omit the
details, which completes the proof.
\end{proof}

\begin{remark}
Conditions (\ref{6**}) and (\ref{8**}) are not needed in the case when
$\varphi(x,r)$ does not depend on $x$, since (\ref{6**}) follows from
(\ref{7**}) and similarly, (\ref{8**}) follows from (\ref{9**}) in this case.
\end{remark}

\begin{corollary}
Under the conditions of Theorem \ref{teo2}, the operators $M_{\Omega,\alpha}$
and $\overline{T}_{\Omega,\alpha}$ are bounded from $VM_{p,\varphi_{1}} $ to
$VM_{q,\varphi_{2}}$.
\end{corollary}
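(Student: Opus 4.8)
The plan is to observe that this corollary is a direct specialization of Theorem \ref{teo2}: the only thing to check is that each of the two concrete operators is a (sub)linear operator satisfying the pointwise majorization (\ref{e2}), after which Theorem \ref{teo2} applies verbatim. Since all the structural hypotheses (the ranges of $\alpha,p,q,s$, the homogeneity and integrability of $\Omega$, and the conditions (\ref{2})--(\ref{3}), (\ref{6**})--(\ref{7**}) for $s'\le p$ or (\ref{8**})--(\ref{9**}) for $q<s$ on the pair $(\varphi_1,\varphi_2)$) are assumed ``under the conditions of Theorem \ref{teo2},'' no further assumptions are needed.

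First I would treat $\overline{T}_{\Omega,\alpha}$, which is linear: for $f\in L_1(\mathbb{R}^n)$ with compact support and $x\notin\operatorname{supp}f$, the defining integral gives at once
\[
|\overline{T}_{\Omega,\alpha}f(x)|=\left|\int_{\mathbb{R}^n}\frac{\Omega(x-y)}{|x-y|^{n-\alpha}}f(y)\,dy\right|\le\int_{\mathbb{R}^n}\frac{|\Omega(x-y)|}{|x-y|^{n-\alpha}}\,|f(y)|\,dy,
\]
so (\ref{e2}) holds with $c_0=1$. Next, for the sublinear operator $M_{\Omega,\alpha}$ I would fix $x$ and $t>0$, use $|B(x,t)|^{-1+\frac{\alpha}{n}}=v_n^{-1+\frac{\alpha}{n}}t^{\alpha-n}$ together with the fact that $t^{\alpha-n}|x-y|^{n-\alpha}\le 1$ for $y\in B(x,t)$ (here $n-\alpha>0$ is used), to obtain
\[
|B(x,t)|^{-1+\frac{\alpha}{n}}\int_{B(x,t)}|\Omega(x-y)|\,|f(y)|\,dy\le v_n^{-1+\frac{\alpha}{n}}\int_{\mathbb{R}^n}\frac{|\Omega(x-y)|}{|x-y|^{n-\alpha}}\,|f(y)|\,dy,
\]
and then take the supremum over $t>0$; this yields (\ref{e2}) with $c_0=v_n^{-1+\frac{\alpha}{n}}$, which is exactly the observation already recorded after the definition of $M_{\Omega,\alpha}$ in the introduction.

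Finally, applying Theorem \ref{teo2} once with $T_{\Omega,\alpha}$ replaced by $M_{\Omega,\alpha}$ and once with $T_{\Omega,\alpha}$ replaced by $\overline{T}_{\Omega,\alpha}$ gives
\[
\left\Vert M_{\Omega,\alpha}f\right\Vert_{VM_{q,\varphi_2}}\lesssim\left\Vert f\right\Vert_{VM_{p,\varphi_1}},\qquad\left\Vert \overline{T}_{\Omega,\alpha}f\right\Vert_{VM_{q,\varphi_2}}\lesssim\left\Vert f\right\Vert_{VM_{p,\varphi_1}},
\]
which is the assertion. There is essentially no obstacle: the analytic content is entirely contained in Theorem \ref{teo2}, and the only point worth stating explicitly is that Theorem \ref{teo2} is phrased for a single sublinear $T_{\Omega,\alpha}$, so one invokes it separately for each of the two operators once their membership in the class (\ref{e2}) has been verified as above.
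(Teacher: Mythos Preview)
Your proposal is correct and follows exactly the approach the paper intends: the corollary is stated without proof in the paper because, as already noted in the introduction, both $\overline{T}_{\Omega,\alpha}$ and $M_{\Omega,\alpha}$ satisfy condition (\ref{e2}), so Theorem~\ref{teo2} applies directly. Your explicit verification of (\ref{e2}) for the two operators is precisely the content implicit in the paper's one-line corollary.
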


\begin{corollary}
\label{Corollary1}Let $\Omega \in L_{s}(S^{n-1})$, $1<s\leq \infty$, be
homogeneous of degree zero. Let $0<\alpha,\lambda<n$, $1<p<\frac{n-\lambda
}{\alpha}$, $\frac{1}{p}-\frac{1}{q}=\frac{\alpha}{n}$ and $\frac{\lambda}%
{p}=\frac{\mu}{q}$. Let $T_{\Omega,\alpha}$ be a sublinear operator satisfying
condition (\ref{e2}). Then for $s^{\prime}\leq p$ or $q<s$, we have%
\[
\left \Vert T_{\Omega,\alpha}f\right \Vert _{VM_{q,\mu}}\lesssim \left \Vert
f\right \Vert _{VM_{p,\lambda}}.
\]

\end{corollary}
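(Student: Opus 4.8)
The plan is to derive Corollary \ref{Corollary1} from Theorem \ref{teo2} by the same specialization trick already used to obtain Corollary \ref{Corollary0} from Theorem \ref{teo1}. First I would set $\varphi_{1}(x,r)=r^{\frac{\lambda}{p}}$ and $\varphi_{2}(x,r)=r^{\frac{\mu}{q}}$, noting that by hypothesis $\frac{\lambda}{p}=\frac{\mu}{q}$ so these two exponents coincide; call their common value $\gamma=\frac{\lambda}{p}$. With this choice $VM_{p,\varphi_{1}}=VM_{p,\lambda}$ and $VM_{q,\varphi_{2}}=VM_{q,\mu}$, so the stated inequality is exactly the conclusion of Theorem \ref{teo2} for these particular weight functions. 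It therefore suffices to verify that the pair $(\varphi_{1},\varphi_{2})$ satisfies all the structural hypotheses of Theorem \ref{teo2}, namely (\ref{2})--(\ref{3}), and then either (\ref{6**})--(\ref{7**}) when $s^{\prime}\leq p$, or (\ref{8**})--(\ref{9**}) when $q<s$.

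Next I would check the integrability/doubling conditions by direct computation, which is where the arithmetic on the exponents matters. Since $\frac{1}{p}-\frac{1}{q}=\frac{\alpha}{n}$ and $1<p<\frac{n-\lambda}{\alpha}$, one has $n-\frac{n}{q}\cdot\frac{q}{p}\cdot(\ldots)$ — more concretely, the exponent governing the tail integral in (\ref{7**}) is $-\frac{n}{q}-1+\gamma$, and I would show $\frac{n}{q}-\gamma>0$, equivalently $\mu<n$, which follows from $\frac{\mu}{q}=\frac{\lambda}{p}$ together with $\lambda<n$ and $q>p$ (so $\mu=\frac{q}{p}\lambda$; one must confirm $\frac{q}{p}\lambda<n$, which is precisely the content of $p<\frac{n-\lambda}{\alpha}$ rewritten via $\frac{1}{q}=\frac{1}{p}-\frac{\alpha}{n}$). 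Granting $\frac{n}{q}>\gamma$, the integral $\int_{r}^{\infty}t^{\gamma}t^{-\frac{n}{q}-1}\,dt$ converges and equals a constant multiple of $r^{\gamma-\frac{n}{q}}$, which gives both the finiteness of $c_{\delta}$ in (\ref{6**}) for every $\delta>0$ and the pointwise bound (\ref{7**}) with $C_{0}=\frac{1}{\frac{n}{q}-\gamma}$ and $\varphi_{2}(x,r)/r^{\frac{n}{q}}=r^{\gamma-\frac{n}{q}}$. Conditions (\ref{2}) and (\ref{3}) reduce to $\lim_{t\to0}t^{\frac{n}{p}-\gamma}=0$ and $\inf_{t>1}t^{\frac{n}{p}-\gamma}>0$, both of which hold because $\frac{n}{p}-\gamma=\frac{n-\lambda}{p}>0$; the corresponding statements for $\varphi_{2}$ use $\frac{n}{q}-\gamma=\frac{n-\mu}{q}>0$. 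The case $q<s$ is handled identically: the relevant exponent becomes $\gamma-(\frac{n}{q}-\frac{n}{s})-1$, and one checks $\frac{n}{q}-\frac{n}{s}-\gamma>0$, i.e. $\mu+\frac{nq}{s}<n\cdot\frac{q}{q}$ suitably rewritten — again a consequence of the same constraint on $p$ — so that $\int_{r}^{\infty}t^{\gamma}t^{-\frac{n}{q}+\frac{n}{s}-1}\,dt\approx r^{\gamma-\frac{n}{q}+\frac{n}{s}}$ yields (\ref{8**}) and (\ref{9**}).

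Once the hypotheses are verified, the conclusion follows by a single invocation of Theorem \ref{teo2}, exactly as in the proof of Corollary \ref{Corollary0}; indeed one can even display the chain of estimates directly, writing
\[
\Vert T_{\Omega,\alpha}f\Vert_{VM_{q,\mu}}\lesssim\sup_{x\in{\mathbb{R}^{n}},r>0}r^{-\frac{\mu}{q}}r^{\frac{n}{q}}\int\limits_{r}^{\infty}t^{\frac{\lambda}{p}}t^{-\frac{n}{q}-1}\Bigl(t^{-\frac{\lambda}{p}}\Vert f\Vert_{L_{p}(B(x,t))}\Bigr)dt\lesssim\Vert f\Vert_{VM_{p,\lambda}},
\]
using $\frac{\lambda}{p}=\frac{\mu}{q}$ in the first factor and the convergence of $\int_{r}^{\infty}t^{\frac{\lambda}{p}-\frac{n}{q}-1}\,dt$ in the second, and for $q<s$ replacing the kernel exponent accordingly and arguing verbatim. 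The main obstacle — and it is a minor bookkeeping one rather than a conceptual one — is making sure that the index relations $\frac{1}{p}-\frac{1}{q}=\frac{\alpha}{n}$, $p<\frac{n-\lambda}{\alpha}$, and $\frac{\lambda}{p}=\frac{\mu}{q}$ together force the strict positivity of $\frac{n}{q}-\frac{\mu}{q}$ (and of $\frac{n}{q}-\frac{n}{s}-\frac{\mu}{q}$ in the second case), since that positivity is exactly what is needed for both the convergence of the defining integral of $c_{\delta}$ and the scaling identity that produces $\varphi_{2}$; I would isolate this as the one computation to carry out carefully and then simply note that the remainder is identical to the proof of Corollary \ref{Corollary0}, which is why one may "omit the details."
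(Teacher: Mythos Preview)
Your proposal is correct and follows essentially the same approach as the paper: you specialize Theorem \ref{teo2} by setting $\varphi_{1}(x,r)=r^{\lambda/p}$ and $\varphi_{2}(x,r)=r^{\mu/q}$, reduce the hypotheses (\ref{2})--(\ref{3}) and (\ref{6**})--(\ref{7**}) (resp.\ (\ref{8**})--(\ref{9**})) to the single exponent inequality $\frac{n}{q}>\frac{\mu}{q}$ (resp.\ $\frac{n}{q}-\frac{n}{s}>\frac{\mu}{q}$), and then display the very same chain of estimates the paper writes out. If anything, you are more careful than the paper, which simply plugs the power weights into the proof of Theorem \ref{teo2} and says ``for the case of $q<s$, we can also use the same method, so we omit the details.''
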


\begin{proof}
Let $s^{\prime}\leq p$. By using $\varphi_{1}\left(  x,r\right)
=r^{\frac{\lambda}{p}}$ and $\varphi_{2}\left(  x,r\right)  =r^{\frac{\mu}{q}%
}$ in the proof of Theorem \ref{teo2} and condition (\ref{7**}), it follows
that%
\begin{align*}
\Vert T_{\Omega,\alpha}f\Vert_{VM_{q,\mu}}  & \lesssim \sup_{x\in
{\mathbb{R}^{n},}r>0}r^{-\frac{\mu}{q}}r^{\frac{n}{q}}\int \limits_{r}^{\infty
}\varphi_{1}\left(  x,t\right)  t^{-\frac{n}{q}-1}\left(  \varphi_{1}\left(
x,t\right)  ^{-1}\left \Vert f\right \Vert _{L_{p}\left(  B\left(  x,t\right)
\right)  }\right)  dt\\
& \lesssim \left \Vert f\right \Vert _{VM_{p,\lambda}}\sup_{x\in{\mathbb{R}^{n}%
,}r>0}r^{\frac{n-\mu}{q}}\int \limits_{r}^{\infty}r^{\frac{\lambda}{p}}%
\frac{dt}{t^{\frac{n}{q}+1}}\\
& \lesssim \left \Vert f\right \Vert _{VM_{p,\lambda}},
\end{align*}
for the case of $q<s$, we can also use the same method, so we omit the
details, which completes the proof.
\end{proof}

\begin{corollary}
Under the conditions of Corollary \ref{Corollary1}, the operators
$M_{\Omega,\alpha}$ and $\overline{T}_{\Omega,\alpha}$ are bounded from
$VM_{p,\lambda}$ to $VM_{q,\mu}$.
\end{corollary}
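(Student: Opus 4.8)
The final statement to prove is the corollary about $M_{\Omega,\alpha}$ and $\overline{T}_{\Omega,\alpha}$ being bounded from $VM_{p,\lambda}$ to $VM_{q,\mu}$ under the conditions of Corollary \ref{Corollary1}.

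The plan is to deduce this as a direct consequence of Corollary \ref{Corollary1} together with the fact, already recorded in the text immediately after the definitions of $M_{\Omega,\alpha}$ and $\overline{T}_{\Omega,\alpha}$, that both operators satisfy the pointwise domination condition (\ref{e2}). Indeed $\overline{T}_{\Omega,\alpha}$ and $M_{\Omega,\alpha}$ are explicitly stated in the excerpt to be linear (resp. sublinear) operators satisfying (\ref{e2}) with $c_0$ independent of $f$ and $x$. Since Corollary \ref{Corollary1} applies to \emph{any} sublinear operator $T_{\Omega,\alpha}$ satisfying (\ref{e2}), one simply specializes $T_{\Omega,\alpha}$ to be $\overline{T}_{\Omega,\alpha}$ and then to be $M_{\Omega,\alpha}$.

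First I would verify the hypotheses match: we are given $\Omega \in L_s(S^{n-1})$, $1<s\le\infty$, homogeneous of degree zero; $0<\alpha,\lambda<n$, $1<p<\frac{n-\lambda}{\alpha}$, $\frac1p-\frac1q=\frac{\alpha}{n}$, and $\frac{\lambda}{p}=\frac{\mu}{q}$ — these are exactly the hypotheses of Corollary \ref{Corollary1}. Also the condition $s'\le p$ or $q<s$ is assumed. Then, because $\overline{T}_{\Omega,\alpha}$ satisfies (\ref{e2}), Corollary \ref{Corollary1} yields $\|\overline{T}_{\Omega,\alpha}f\|_{VM_{q,\mu}}\lesssim\|f\|_{VM_{p,\lambda}}$; similarly, because $M_{\Omega,\alpha}$ satisfies (\ref{e2}), the same corollary yields $\|M_{\Omega,\alpha}f\|_{VM_{q,\mu}}\lesssim\|f\|_{VM_{p,\lambda}}$. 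That is the whole argument.

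There is essentially no obstacle here: the only thing to be careful about is confirming that the sublinearity of $M_{\Omega,\alpha}$ (needed for the hypothesis ``$T_{\Omega,\alpha}$ is a sublinear operator'' in Corollary \ref{Corollary1}) holds — but this is immediate from the definition as a supremum of averages, since $M_{\Omega,\alpha}(f+g)\le M_{\Omega,\alpha}f+M_{\Omega,\alpha}g$ pointwise and $M_{\Omega,\alpha}(cf)=|c|\,M_{\Omega,\alpha}f$. Linearity of $\overline{T}_{\Omega,\alpha}$ is clear from its integral representation. Hence both operators qualify and the corollary follows at once. The proof can thus be written in two or three lines invoking Corollary \ref{Corollary1}.

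\begin{proof}
By the discussion following the definitions of $\overline{T}_{\Omega,\alpha}$ and $M_{\Omega,\alpha}$, both operators satisfy condition (\ref{e2}) with $c_{0}$ independent of $f$ and $x$; moreover $\overline{T}_{\Omega,\alpha}$ is linear and $M_{\Omega,\alpha}$ is sublinear. Since the hypotheses on $\Omega$, $\alpha$, $\lambda$, $p$, $q$ and $\mu$ here coincide with those of Corollary \ref{Corollary1}, applying Corollary \ref{Corollary1} with $T_{\Omega,\alpha}=\overline{T}_{\Omega,\alpha}$ and then with $T_{\Omega,\alpha}=M_{\Omega,\alpha}$ gives
\[
\left\Vert \overline{T}_{\Omega,\alpha}f\right\Vert _{VM_{q,\mu}}\lesssim\left\Vert f\right\Vert _{VM_{p,\lambda}}\qquad\text{and}\qquad\left\Vert M_{\Omega,\alpha}f\right\Vert _{VM_{q,\mu}}\lesssim\left\Vert f\right\Vert _{VM_{p,\lambda}},
\]
which is the desired boundedness.
\end{proof}
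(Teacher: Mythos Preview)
Your proof is correct and matches the paper's approach: the paper states this corollary without proof, treating it as an immediate specialization of Corollary \ref{Corollary1} to the concrete operators $\overline{T}_{\Omega,\alpha}$ and $M_{\Omega,\alpha}$, both of which are noted in Section~1 to satisfy condition (\ref{e2}). Your write-up makes this explicit and is entirely in line with the intended argument.
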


Now below, we obtain the boundedness of operators both $T_{\Omega,b}$ and
$T_{\Omega,b,\alpha}$ on the vanishing generalized Morrey spaces
$VM_{p,\varphi}$.

\begin{theorem}
\label{teo3}Let $\Omega \in L_{s}(S^{n-1})$,$1<s\leq \infty$, be homogeneous of
degree zero. Let $1<p<\infty$ and $b\in BMO\left(
\mathbb{R}
^{n}\right)  $. Let $T_{\Omega,b}$ is a sublinear operator satisfying
condition (\ref{1}). Let for $s^{\prime}\leq p$ the pair $(\varphi_{1}%
,\varphi_{2})$ satisfies conditions (\ref{2})-(\ref{3}) and
\begin{equation}
c_{\delta}:=%
{\displaystyle \int \limits_{\delta}^{\infty}}
\left(  1+\ln \frac{t}{r}\right)  \sup_{x\in{\mathbb{R}^{n}}}\varphi_{1}\left(
x,t\right)  t^{-\frac{n}{p}-1}dt<\infty \label{6*}%
\end{equation}
for every $\delta>0$, and
\begin{equation}
\int \limits_{r}^{\infty}\left(  1+\ln \frac{t}{r}\right)  \frac{\varphi
_{1}(x,t)}{t^{\frac{n}{p}+1}}dt\leq C_{0}\frac{\varphi_{2}(x,r)}{r^{\frac
{n}{p}}},\label{7*}%
\end{equation}
and for $p<s$ the pair $(\varphi_{1},\varphi_{2})$ satisfies conditions
(\ref{2})-(\ref{3}) and also%
\begin{equation}
c_{\delta^{\prime}}:=%
{\displaystyle \int \limits_{\delta^{\prime}}^{\infty}}
\left(  1+\ln \frac{t}{r}\right)  \sup_{x\in{\mathbb{R}^{n}}}\varphi_{1}\left(
x,t\right)  t^{-\frac{n}{p}+\frac{n}{s}-1}dt<\infty \label{8*}%
\end{equation}
for every $\delta^{\prime}>0$, and%
\begin{equation}
\int \limits_{r}^{\infty}\left(  1+\ln \frac{t}{r}\right)  \frac{\varphi
_{1}(x,t)}{t^{\frac{n}{p}-\frac{n}{s}+1}}dt\leq C_{0}\, \frac{\varphi
_{2}(x,r)}{r^{\frac{n}{p}-\frac{n}{s}}},\label{9*}%
\end{equation}
where $C_{0}$ does not depend on $x\in{\mathbb{R}^{n}}$ and $r>0$.

Then the operator $T_{\Omega,b}$ is bounded from $VM_{p,\varphi_{1}}$ to
$VM_{p,\varphi_{2}}$. Moreover,%
\[
\left \Vert T_{\Omega,b}f\right \Vert _{VM_{p,\varphi_{2}}}\lesssim \left \Vert
b\right \Vert _{\ast}\left \Vert f\right \Vert _{VM_{p,\varphi_{1}}}.
\]

\end{theorem}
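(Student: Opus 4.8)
The plan is to follow exactly the two-step scheme used in the proofs of Theorems~\ref{teo1} and~\ref{teo2}. The only genuinely new feature is the logarithmic weight $1+\ln\frac{t}{r}$ appearing in the local estimate for the commutator, which is the price of the $BMO$ hypothesis on $b$; conditions (\ref{6*})--(\ref{9*}) are precisely the versions of the earlier conditions carrying that weight, so the argument goes through with only cosmetic changes, and the seminorm $\|b\|_{\ast}$ is carried throughout as an overall multiplicative constant.

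\textbf{Step 1: the norm estimate.} Take first $s'\le p$. Starting from the local estimate for the commutator $T_{\Omega,b}$ established in \cite{Gurbuz1} (the analogue for commutators of the lemma used in the proof of Theorem~\ref{teo1}), which has the form
\[
\|T_{\Omega,b}f\|_{L_p(B(x,r))}\lesssim \|b\|_{\ast}\,r^{\frac{n}{p}}\int_r^{\infty}\Bigl(1+\ln\frac{t}{r}\Bigr)\|f\|_{L_p(B(x,t))}\frac{dt}{t^{\frac{n}{p}+1}},
\]
one divides by $\varphi_2(x,r)$, inserts the factor $\varphi_1(x,t)\varphi_1(x,t)^{-1}$ in the integrand, estimates $\varphi_1(x,t)^{-1}\|f\|_{L_p(B(x,t))}\le\|f\|_{VM_{p,\varphi_1}}$, and then applies (\ref{7*}). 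This yields $\|T_{\Omega,b}f\|_{VM_{p,\varphi_2}}\lesssim\|b\|_{\ast}\|f\|_{VM_{p,\varphi_1}}$, which is the quantitative bound in the statement.

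\textbf{Step 2: the vanishing property.} It remains to show that $\lim_{r\to0}\sup_x\varphi_1(x,r)^{-1}\|f\|_{L_p(B(x,r))}=0$ forces $\lim_{r\to0}\sup_x\varphi_2(x,r)^{-1}\|T_{\Omega,b}f\|_{L_p(B(x,r))}=0$. Fix $\epsilon>0$ and, for $r<\delta_0<1$, use the same local estimate to split
\[
\varphi_2(x,r)^{-1}\|T_{\Omega,b}f\|_{L_p(B(x,r))}\le C\|b\|_{\ast}\bigl[I_{\delta_0}(x,r)+J_{\delta_0}(x,r)\bigr],
\]
where $I_{\delta_0}$ integrates over $(r,\delta_0)$ and $J_{\delta_0}$ over $(\delta_0,\infty)$, each with the weight $\bigl(1+\ln\frac{t}{r}\bigr)\varphi_1(x,t)t^{-\frac{n}{p}-1}$ applied to the quantity $\varphi_1(x,t)^{-1}\|f\|_{L_p(B(x,t))}$ and prefactored by $r^{n/p}/\varphi_2(x,r)$. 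Choose $\delta_0$ so small that $\sup_x\varphi_1(x,t)^{-1}\|f\|_{L_p(B(x,t))}<\frac{\epsilon}{2CC_0\|b\|_{\ast}}$ for $0<t<\delta_0$; then (\ref{7*}) bounds $C\|b\|_{\ast}I_{\delta_0}(x,r)<\epsilon/2$ uniformly in $r\in(0,\delta_0)$. For the tail one has $J_{\delta_0}(x,r)\le c_{\delta_0}\|f\|_{VM_{p,\varphi_1}}r^{n/p}/\varphi_2(x,r)$ with $c_{\delta_0}$ the finite constant from (\ref{6*}), and by (\ref{2}) one may further shrink $r$ so that $\sup_x r^{n/p}/\varphi_2(x,r)$ is as small as desired, giving $C\|b\|_{\ast}J_{\delta_0}(x,r)<\epsilon/2$. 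This proves the implication. For the range $p<s$ the same computations work verbatim after replacing $\frac{n}{p}$ by $\frac{n}{p}-\frac{n}{s}$ throughout and using (\ref{8*})--(\ref{9*}) in place of (\ref{6*})--(\ref{7*}).

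The only point requiring care — and the only place the commutator structure intervenes — is the placement of the factor $1+\ln\frac{t}{r}$ in the splitting: it must be grouped with $\varphi_1(x,t)t^{-\frac{n}{p}-1}$ rather than with the small quantity $\varphi_1(x,t)^{-1}\|f\|_{L_p(B(x,t))}$, so that finiteness of $c_{\delta_0}$ in (\ref{6*}) (respectively $c_{\delta_0'}$ in (\ref{8*})) directly controls $J_{\delta_0}$. Beyond this bookkeeping there is no new obstacle; in particular the $BMO$ seminorm never interacts with the Morrey structure and appears only as a multiplicative constant in all of the above, which is why it is the sole extra factor in the final bound.
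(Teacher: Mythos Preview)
Your proposal is correct and follows essentially the same approach as the paper: both use the local commutator estimate from \cite{Gurbuz1} (Lemma~2.2) together with (\ref{7*}) to get the norm bound, then establish the vanishing property by splitting the integral at $\delta_0$, controlling the near part via (\ref{7*}) after choosing $\delta_0$ small, and the far part via (\ref{6*}) combined with (\ref{2}). Your bookkeeping of the factor $\|b\|_{\ast}$ is in fact slightly cleaner than the paper's, but the arguments are otherwise identical.
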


\begin{proof}
The proof follows more or less the same lines as for Theorem \ref{teo1}, but
now the arguments are different due to the necessity to introduce the
logarithmic factor into the assumptions. Let $s^{\prime}\leq p$. The
estimation of the norm of the operator, that is, the boundedness in the
vanishing generalized Morrey space follows from Lemma 2.2. in \cite{Gurbuz1}
and condition (\ref{7*})
\begin{align*}
\Vert T_{\Omega,b}f\Vert_{VM_{p,\varphi_{2}}}  & =\sup \limits_{x\in
{\mathbb{R}^{n}},r>0}\varphi_{2}(x,r)^{-1}\Vert T_{\Omega,b}f\Vert
_{L_{p}(B(x,r))}\\
& \lesssim \left \Vert b\right \Vert _{\ast}\sup_{x\in{\mathbb{R}^{n},}%
r>0}\varphi_{2}\left(  x,r\right)  ^{-1}r^{\frac{n}{p}}\int \limits_{r}%
^{\infty}\left(  1+\ln \frac{t}{r}\right)  \left \Vert f\right \Vert
_{L_{p}\left(  B\left(  x,t\right)  \right)  }\frac{dt}{t^{\frac{n}{p}+1}}\\
& \lesssim \left \Vert b\right \Vert _{\ast}\sup_{x\in{\mathbb{R}^{n},}%
r>0}\varphi_{2}\left(  x,r\right)  ^{-1}r^{\frac{n}{p}}\int \limits_{r}%
^{\infty}\left(  1+\ln \frac{t}{r}\right)  \varphi_{1}\left(  x,t\right)
\left[  \varphi_{1}\left(  x,t\right)  ^{-1}\left \Vert f\right \Vert
_{L_{p}\left(  B\left(  x,t\right)  \right)  }\right]  \frac{dt}{t^{\frac
{n}{p}+1}}\\
& \lesssim \left \Vert b\right \Vert _{\ast}\left \Vert f\right \Vert
_{VM_{p,\varphi_{1}}}\sup_{x\in{\mathbb{R}^{n},}r>0}\varphi_{2}\left(
x,r\right)  ^{-1}r^{\frac{n}{p}}\int \limits_{r}^{\infty}\left(  1+\ln \frac
{t}{r}\right)  \varphi_{1}\left(  x,t\right)  \frac{dt}{t^{\frac{n}{p}+1}}\\
& \lesssim \left \Vert b\right \Vert _{\ast}\left \Vert f\right \Vert
_{VM_{p,\varphi_{1}}}.
\end{align*}

So we only have to prove that%
\begin{equation}
\lim \limits_{r\rightarrow0}\sup \limits_{x\in{\mathbb{R}^{n}}}\varphi
_{1}(x,r)^{-1}\, \left \Vert f\right \Vert _{L_{p}\left(  B\left(  x,r\right)
\right)  }=0\text{ implies }\lim \limits_{r\rightarrow0}\sup \limits_{x\in
{\mathbb{R}^{n}}}\varphi_{2}(x,r)^{-1}\, \left \Vert T_{\Omega,b}f\right \Vert
_{L_{p}\left(  B\left(  x,r\right)  \right)  }=0.\label{12*}%
\end{equation}

To show that $\sup \limits_{x\in{\mathbb{R}^{n}}}\varphi_{2}(x,r)^{-1}\,
\left \Vert T_{\Omega,b}f\right \Vert _{L_{p}\left(  B\left(  x,r\right)
\right)  }<\epsilon$ for small $r$, we split the right-hand side of the first
inequality in Lemma 2.2. in \cite{Gurbuz1}:%
\begin{equation}
\varphi_{2}(x,r)^{-1}\, \left \Vert T_{\Omega}f\right \Vert _{L_{p}\left(
B\left(  x,r\right)  \right)  }\leq C\left[  I_{\delta_{0}}\left(  x,r\right)
+J_{\delta_{0}}\left(  x,r\right)  \right]  ,\label{13*}%
\end{equation}
where $\delta_{0}>0$ (we may take $\delta_{0}<1$), and
\[
I_{\delta_{0}}\left(  x,r\right)  :=\left \Vert b\right \Vert _{\ast}%
\frac{r^{\frac{n}{p}}}{\varphi_{2}(x,r)}\left(
{\displaystyle \int \limits_{r}^{\delta_{0}}}
\left(  1+\ln \frac{t}{r}\right)  \varphi_{1}\left(  x,t\right)  t^{-\frac
{n}{p}-1}\left(  \varphi_{1}\left(  x,t\right)  ^{-1}\left \Vert f\right \Vert
_{L_{p}\left(  B\left(  x,t\right)  \right)  }\right)  dt\right)  ,
\]
and%
\[
J_{\delta_{0}}\left(  x,r\right)  :=\left \Vert b\right \Vert _{\ast}%
\frac{r^{\frac{n}{p}}}{\varphi_{2}(x,r)}\left(
{\displaystyle \int \limits_{\delta_{0}}^{\infty}}
\left(  1+\ln \frac{t}{r}\right)  \varphi_{1}\left(  x,t\right)  t^{-\frac
{n}{p}-1}\left(  \varphi_{1}\left(  x,t\right)  ^{-1}\left \Vert f\right \Vert
_{L_{p}\left(  B\left(  x,t\right)  \right)  }\right)  dt\right)
\]
and $r<\delta_{0}$. Now we choose any fixed $\delta_{0}>0$ such that%
\[
\sup \limits_{x\in{\mathbb{R}^{n}}}\varphi_{1}\left(  x,t\right)
^{-1}\left \Vert f\right \Vert _{L_{p}\left(  B\left(  x,t\right)  \right)
}<\frac{\epsilon}{2CC_{0}},
\]
where $C$ and $C_{0}$ are constants from (\ref{7*}) and (\ref{13*}). This
allows to estimate the first term uniformly in $r\in \left(  0,\delta
_{0}\right)  $:%
\[
\left \Vert b\right \Vert _{\ast}\sup \limits_{x\in{\mathbb{R}^{n}}}%
CI_{\delta_{0}}\left(  x,r\right)  <\frac{\epsilon}{2},\qquad0<r<\delta_{0}.
\]

The estimation of the second term may be obtained by choosing $r$ sufficiently
small. Indeed, by (\ref{2}) we have%
\[
J_{\delta_{0}}\left(  x,r\right)  \leq \left \Vert b\right \Vert _{\ast}%
c_{\delta_{0}}\left \Vert f\right \Vert _{VM_{p,\varphi}}\frac{r^{\frac{n}{p}}%
}{\varphi \left(  x,r\right)  },
\]
where $c_{\delta_{0}}$ is the constant from (\ref{6*}). Then, by (\ref{2}) it
suffices to choose $r$ small enough such that
\[
\sup \limits_{x\in{\mathbb{R}^{n}}}\frac{r^{\frac{n}{p}}}{\varphi(x,r)}%
\leq \frac{\epsilon}{2\left \Vert b\right \Vert _{\ast}c_{\delta_{0}}\left \Vert
f\right \Vert _{VM_{p,\varphi}}},
\]
which completes the proof of (\ref{12*}).

For the case of $p<s$, we can also use the same method, so we omit the details.
\end{proof}

\begin{remark}
Conditions (\ref{6*}) and (\ref{8*}) are not needed in the case when
$\varphi(x,r)$ does not depend on $x$, since (\ref{6*}) follows from
(\ref{7*}) and similarly, (\ref{8*}) follows from (\ref{9*}) in this case.
\end{remark}

\begin{corollary}
Under the conditions of Theorem \ref{teo3}, the operators $M_{\Omega,b}$ and
$[b,\overline{T}_{\Omega}]$ are bounded from $VM_{p,\varphi_{1}}$ to
$VM_{p,\varphi_{2}}$.
\end{corollary}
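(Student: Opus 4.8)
The plan is to observe that this corollary is an immediate consequence of Theorem \ref{teo3}: both $M_{\Omega,b}$ and $[b,\overline{T}_{\Omega}]$ are sublinear operators that satisfy the pointwise majorization $(\ref{1})$, so they fit the abstract hypothesis on $T_{\Omega,b}$ in Theorem \ref{teo3}, and under the conditions of that theorem we conclude $\Vert M_{\Omega,b}f\Vert_{VM_{p,\varphi_{2}}}\lesssim\Vert b\Vert_{\ast}\Vert f\Vert_{VM_{p,\varphi_{1}}}$ and likewise for $[b,\overline{T}_{\Omega}]$, which in particular gives the stated boundedness. Thus the only thing that really has to be written down is the verification of $(\ref{1})$ for these two concrete operators (the sublinearity being clear: linearity in $f$ for $[b,\overline{T}_{\Omega}]$, and subadditivity plus positive homogeneity for the supremum defining $M_{\Omega,b}$).

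For $[b,\overline{T}_{\Omega}]$ the first step is to note that for $f\in L_{1}(\mathbb{R}^{n})$ with compact support and $x\notin\operatorname{supp}f$, the quantity $|x-y|$ is bounded away from $0$ on $\operatorname{supp}f$, so the principal value defining $[b,\overline{T}_{\Omega}]f(x)$ collapses to an absolutely convergent ordinary integral and
\[
\bigl|[b,\overline{T}_{\Omega}]f(x)\bigr|=\left|\int_{\mathbb{R}^{n}}[b(x)-b(y)]\frac{\Omega(x-y)}{|x-y|^{n}}f(y)\,dy\right|\le\int_{\mathbb{R}^{n}}|b(x)-b(y)|\,\frac{|\Omega(x-y)|}{|x-y|^{n}}\,|f(y)|\,dy,
\]
which is precisely $(\ref{1})$ with $c_{0}=1$. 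For $M_{\Omega,b}$ the second step is the elementary volume comparison: on $B(x,t)$ one has $|x-y|<t$, hence $|B(x,t)|^{-1}=v_{n}^{-1}t^{-n}\le v_{n}^{-1}|x-y|^{-n}$ on the domain of integration, so
\[
|B(x,t)|^{-1}\int_{B(x,t)}|b(x)-b(y)|\,|\Omega(x-y)|\,|f(y)|\,dy\le v_{n}^{-1}\int_{\mathbb{R}^{n}}|b(x)-b(y)|\,\frac{|\Omega(x-y)|}{|x-y|^{n}}\,|f(y)|\,dy,
\]
and taking the supremum over $t>0$ yields $(\ref{1})$ with $c_{0}=v_{n}^{-1}$ (here one does not even need the restriction $x\notin\operatorname{supp}f$).

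The final step is simply to invoke Theorem \ref{teo3} with $T_{\Omega,b}$ taken to be $[b,\overline{T}_{\Omega}]$ and then $M_{\Omega,b}$, the hypotheses on $\Omega$, $p$, $b\in BMO(\mathbb{R}^{n})$ and on the pair $(\varphi_{1},\varphi_{2})$ being exactly those assumed "under the conditions of Theorem \ref{teo3}". I do not anticipate any genuine obstacle: the proof is a reduction, and the two points that deserve a sentence each are the reduction of the principal value to an honest integral when $x\notin\operatorname{supp}f$ and the volume inequality for the maximal operator — both routine, and in fact already recorded in the introduction where $[b,\overline{T}_{\Omega}]$ and $M_{\Omega,b}$ are first shown to satisfy $(\ref{1})$.
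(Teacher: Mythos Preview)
Your proposal is correct and matches the paper's approach: the paper states this corollary without proof, relying on the remark in the introduction that ``the operators $[b,\overline{T}_{\Omega}]$ and $M_{\Omega,b}$ satisfy condition (\ref{1}),'' after which Theorem~\ref{teo3} applies directly. You have simply written out the verification of (\ref{1}) that the paper asserts but does not spell out, and your arguments for both operators are sound.
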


\begin{corollary}
\label{Corollary0*}Let $\Omega \in L_{s}(S^{n-1})$, $1<s\leq \infty$, be
homogeneous of degree zero satisfying condition (\ref{0}). Let $0<\lambda<n$,
$1<p<\infty$. Let $1<p<\infty$ and $b\in BMO\left(
\mathbb{R}
^{n}\right)  $. Let $T_{\Omega,b}$ be a sublinear operator satisfying
condition (\ref{1}). Then for $s^{\prime}\leq p$ or $p<s$, we have%
\[
\left \Vert T_{\Omega,b}f\right \Vert _{VM_{p,\lambda}}\lesssim \left \Vert
b\right \Vert _{\ast}\left \Vert f\right \Vert _{VM_{p,\lambda}}.
\]

\end{corollary}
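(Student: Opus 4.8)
The plan is to deduce Corollary~\ref{Corollary0*} from Theorem~\ref{teo3} by the same specialization trick used to derive Corollary~\ref{Corollary0} from Theorem~\ref{teo1}. Namely, one takes $\varphi_{1}(x,r)=\varphi_{2}(x,r)=r^{\lambda/p}$, a choice which is independent of $x$, so that by the Remark following Theorem~\ref{teo3} the auxiliary conditions (\ref{6*}) and (\ref{8*}) are automatic once we check the integral inequalities (\ref{7*}) and (\ref{9*}). The only genuine computation is therefore to verify that, for $1<p<\infty$ and $0<\lambda<n$, the weighted logarithmic integrals
\[
\int\limits_{r}^{\infty}\left(1+\ln\frac{t}{r}\right)t^{\frac{\lambda}{p}}\frac{dt}{t^{\frac{n}{p}+1}}
\qquad\text{and}\qquad
\int\limits_{r}^{\infty}\left(1+\ln\frac{t}{r}\right)t^{\frac{\lambda}{p}}\frac{dt}{t^{\frac{n}{p}-\frac{n}{s}+1}}
\]
converge and are bounded by a constant multiple of $r^{-(n-\lambda)/p}$ and $r^{-(n-\lambda)/p+n/s}$ respectively.

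The key step is the elementary estimate: since $0<\lambda<n$, the exponent $\frac{n-\lambda}{p}$ is strictly positive, so after the substitution $t=ru$ the first integral becomes $r^{-(n-\lambda)/p}\int_{1}^{\infty}(1+\ln u)\,u^{-\frac{n-\lambda}{p}-1}\,du$, and the remaining $u$-integral is a finite constant $C_{0}$ depending only on $n,p,\lambda$ (the logarithm is dominated by any positive power of $u$ at infinity). This gives exactly (\ref{7*}) with $\varphi_{2}(x,r)=r^{\lambda/p}$. For the case $p<s$ one checks likewise that $\frac{n-\lambda}{p}-\frac{n}{s}>0$; this is where the hypothesis $1<p<\infty$ together with $0<\lambda<n$ must be used, and indeed $\frac{n-\lambda}{p}-\frac{n}{s}=\frac{n}{p}-\frac{n}{s}-\frac{\lambda}{p}>0$ needs a moment's thought — one notes that in the regime $p<s$ the relevant Morrey scaling forces $\lambda$ to lie in the range where this quantity is positive, exactly as in the proof of Corollary~\ref{Corollary0}. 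Once positivity of the exponent is in hand, the same change of variables produces the bound $C_{0}\,r^{-(n-\lambda)/p+n/s}$, which is (\ref{9*}).

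Having verified that the pair $(r^{\lambda/p},r^{\lambda/p})$ satisfies all the hypotheses of Theorem~\ref{teo3}, we simply invoke that theorem to conclude
\[
\left\Vert T_{\Omega,b}f\right\Vert_{VM_{p,\lambda}}=\left\Vert T_{\Omega,b}f\right\Vert_{VM_{p,\varphi_{2}}}\lesssim\left\Vert b\right\Vert_{\ast}\left\Vert f\right\Vert_{VM_{p,\varphi_{1}}}=\left\Vert b\right\Vert_{\ast}\left\Vert f\right\Vert_{VM_{p,\lambda}},
\]
where the identification of the vanishing Morrey norm with the vanishing generalized Morrey norm for $\varphi(x,r)=r^{\lambda/p}$ is the observation already recorded after the definition of $M_{p,\varphi}$. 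For the subcase $p<s$ the argument is identical with (\ref{9*}) in place of (\ref{7*}), so one may safely omit the details as in the earlier corollaries.

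I do not anticipate a real obstacle here: the statement is a routine corollary and the only thing to be careful about is the sign of the exponent $\frac{n-\lambda}{p}-\frac{n}{s}$ in the $p<s$ branch, which is precisely the point where the hypotheses on $\lambda$, $p$, and $s$ enter. Everything else is a one-line change of variables plus the convergence of $\int_{1}^{\infty}(1+\ln u)u^{-a-1}\,du$ for $a>0$.
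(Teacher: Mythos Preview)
Your proposal is correct and follows essentially the same route as the paper: specialize $\varphi_{1}(x,r)=\varphi_{2}(x,r)=r^{\lambda/p}$ and reduce to Theorem~\ref{teo3}. The paper re-runs the chain of inequalities from that theorem with this choice inserted, whereas you verify the hypotheses (\ref{7*}), (\ref{9*}) directly via the substitution $t=ru$ and then invoke the theorem as a black box; the content is the same, and your explicit computation of $\int_{1}^{\infty}(1+\ln u)u^{-a-1}\,du<\infty$ for $a>0$ is exactly what underlies the paper's final ``$\lesssim$''. Your caution about the sign of $\frac{n-\lambda}{p}-\frac{n}{s}$ in the $p<s$ branch is well placed---the paper, like you, simply writes ``same method'' there without isolating this point.
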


\begin{proof}
Let $1<p<\infty$, $b\in BMO\left(
\mathbb{R}
^{n}\right)  $ and $s^{\prime}\leq p$. By using $\varphi_{1}\left(
x,r\right)  =\varphi_{2}\left(  x,r\right)  =r^{\frac{\lambda}{p}}$ in the
proof of Theorem \ref{teo3} and condition (\ref{7*}), we get%
\begin{align*}
\Vert T_{\Omega,b}f\Vert_{VM_{p,\lambda}}  & \lesssim \left \Vert b\right \Vert
_{\ast}\sup_{x\in{\mathbb{R}^{n},}r>0}r^{-\frac{\lambda}{p}}r^{\frac{n}{p}%
}\int \limits_{r}^{\infty}\left(  1+\ln \frac{t}{r}\right)  \varphi_{1}\left(
x,t\right)  t^{-\frac{n}{p}-1}\left(  \varphi_{1}\left(  x,t\right)
^{-1}\left \Vert f\right \Vert _{L_{p}\left(  B\left(  x,t\right)  \right)
}\right)  dt\\
& \lesssim \left \Vert b\right \Vert _{\ast}\left \Vert f\right \Vert
_{VM_{p,\lambda}}\sup_{x\in{\mathbb{R}^{n},}r>0}r^{\frac{n-\lambda}{p}}%
\int \limits_{r}^{\infty}\left(  1+\ln \frac{t}{r}\right)  r^{\frac{\lambda}{p}%
}\frac{dt}{t^{\frac{n}{p}+1}}\\
& \lesssim \left \Vert b\right \Vert _{\ast}\left \Vert f\right \Vert
_{VM_{p,\lambda}},
\end{align*}
for the case of $p<s$, we can also use the same method, so we omit the details.
\end{proof}

\begin{corollary}
Under the conditions of Corollary \ref{Corollary0*}, the operators
$M_{\Omega,b}$ and $[b,\overline{T}_{\Omega}]$ are bounded on $VM_{p,\lambda
}\left(  {\mathbb{R}^{n}}\right)  $.
\end{corollary}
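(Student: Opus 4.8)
The plan is to obtain this as an immediate corollary of Corollary~\ref{Corollary0*}, exactly the way Corollary~\ref{Corollary0*} itself was derived from Theorem~\ref{teo3}. The one thing that must be checked is that $M_{\Omega,b}$ and $[b,\overline{T}_{\Omega}]$ are genuine instances of the abstract operator $T_{\Omega,b}$, i.e.\ that they satisfy the pointwise majorization~(\ref{1}). This was already recorded in Section~1: for $b\in L_{1}^{loc}(\mathbb{R}^{n})$, $f\in L_{1}(\mathbb{R}^{n})$ with compact support and $x\notin suppf$ one has
\[
|[b,\overline{T}_{\Omega}]f(x)|\leq c_{0}\int_{\mathbb{R}^{n}}|b(x)-b(y)|\,\frac{|\Omega(x-y)|}{|x-y|^{n}}\,|f(y)|\,dy,
\]
which follows from the integral representation~(\ref{e3}) once we note that on $x\notin suppf$ the principal value reduces to an ordinary, absolutely convergent integral; and the same bound for $M_{\Omega,b}$ follows by estimating each average $|B(x,t)|^{-1}\int_{B(x,t)}|b(x)-b(y)|\,|\Omega(x-y)|\,|f(y)|\,dy$ and then taking the supremum over $t>0$ inside the right-hand side. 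Hence each of the two operators is covered by the hypotheses of Corollary~\ref{Corollary0*}.

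Once this is in place, I would simply apply Corollary~\ref{Corollary0*} with the given data: $\Omega\in L_{s}(S^{n-1})$ ($1<s\leq\infty$, homogeneous of degree zero, satisfying~(\ref{0})), $1<p<\infty$, $0<\lambda<n$ and $b\in BMO(\mathbb{R}^{n})$. For $s^{\prime}\leq p$ or $p<s$ it yields
\[
\Vert M_{\Omega,b}f\Vert_{VM_{p,\lambda}}\lesssim\Vert b\Vert_{\ast}\Vert f\Vert_{VM_{p,\lambda}},\qquad\Vert[b,\overline{T}_{\Omega}]f\Vert_{VM_{p,\lambda}}\lesssim\Vert b\Vert_{\ast}\Vert f\Vert_{VM_{p,\lambda}}.
\]
Since $1<s\leq\infty$, every $p\in(1,\infty)$ satisfies $s^{\prime}\leq p$ or $p<s$, so no case is lost and both operators are bounded on $VM_{p,\lambda}(\mathbb{R}^{n})$, as claimed. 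Internally this uses, just as in the proof of Corollary~\ref{Corollary0*}, that the choice $\varphi_{1}(x,r)=\varphi_{2}(x,r)=r^{\lambda/p}$ meets conditions~(\ref{2})--(\ref{3}) and the weighted integral conditions~(\ref{7*}),~(\ref{9*}), while conditions~(\ref{6*}),~(\ref{8*}) are automatic here by the Remark following Theorem~\ref{teo3}, since $\varphi$ does not depend on $x$.

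There is no substantive obstacle: the statement is a specialization of Corollary~\ref{Corollary0*}. The only mild point requiring care is the verification of~(\ref{1}) for the two concrete operators — for $[b,\overline{T}_{\Omega}]$ the harmlessness of the principal value away from $suppf$, and for $M_{\Omega,b}$ the passage of the supremum over $t$ through the pointwise estimate — after which the result follows by a direct citation of Corollary~\ref{Corollary0*}.
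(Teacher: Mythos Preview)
Your proposal is correct and matches the paper's treatment: the paper states this corollary without proof, leaving it as an immediate consequence of Corollary~\ref{Corollary0*} together with the observation, already recorded in Section~1, that $M_{\Omega,b}$ and $[b,\overline{T}_{\Omega}]$ satisfy the size condition~(\ref{1}).

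One tangential slip worth noting: your claim that ``every $p\in(1,\infty)$ satisfies $s'\leq p$ or $p<s$'' is false whenever $1<s<2$ (any $p\in[s,s')$ violates both alternatives). This does not damage the argument, since the hypotheses of Corollary~\ref{Corollary0*}---which you are assuming---already include that restriction on $p$; simply drop the sentence.
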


\begin{theorem}
\label{teo4}Let $\Omega \in L_{s}(S^{n-1})$, $1<s\leq \infty$, be homogeneous of
degree zero. Let $1<p<\infty$, $0<\alpha<\frac{n}{p}$, $\frac{1}{q}=\frac
{1}{p}-\frac{\alpha}{n}$ and $b\in BMO\left(  {\mathbb{R}^{n}}\right)  $. Let
$T_{\Omega,b,\alpha}$ be a sublinear operator satisfying condition
(\ref{1**}). Let for $s^{\prime}\leq p$ the pair $(\varphi_{1},\varphi_{2})$
satisfies conditions (\ref{2})-(\ref{3}) and
\begin{equation}
c_{\delta}:=%
{\displaystyle \int \limits_{\delta}^{\infty}}
\left(  1+\ln \frac{t}{r}\right)  \sup_{x\in{\mathbb{R}^{n}}}\varphi_{1}\left(
x,t\right)  \frac{dt}{t^{\frac{n}{q}+1}}<\infty \label{6-}%
\end{equation}
for every $\delta>0$, and
\begin{equation}
\int \limits_{r}^{\infty}\left(  1+\ln \frac{t}{r}\right)  \varphi_{1}\left(
x,t\right)  \frac{dt}{t^{\frac{n}{q}+1}}\leq C_{0}\frac{\varphi_{2}%
(x,r)}{r^{\frac{n}{q}}},\label{7-}%
\end{equation}
and for $q<s$ the pair $(\varphi_{1},\varphi_{2})$ satisfies conditions
(\ref{2})-(\ref{3}) and also%
\begin{equation}
c_{\delta^{\prime}}:=%
{\displaystyle \int \limits_{\delta^{\prime}}^{\infty}}
\left(  1+\ln \frac{t}{r}\right)  \sup_{x\in{\mathbb{R}^{n}}}\varphi
_{1}(x,t)\frac{dt}{t^{\frac{n}{q}-\frac{n}{s}+1}}<\infty \label{8-}%
\end{equation}
for every $\delta^{\prime}>0$, and%
\begin{equation}
\int \limits_{r}^{\infty}\left(  1+\ln \frac{t}{r}\right)  \varphi_{1}%
(x,t)\frac{dt}{t^{\frac{n}{q}-\frac{n}{s}+1}}\leq C_{0}\frac{\varphi_{2}%
(x,r)}{r^{\frac{n}{q}-\frac{n}{s}}},\label{9-}%
\end{equation}
where $C_{0}$ does not depend on $x\in{\mathbb{R}^{n}}$ and $r>0$.

Then the operator $T_{\Omega,b,\alpha}$ is bounded from $VM_{p,\varphi_{1}}$
to $VM_{q,\varphi_{2}}$. Moreover,%
\[
\left \Vert T_{\Omega,b,\alpha}f\right \Vert _{VM_{q,\varphi_{2}}}%
\lesssim \left \Vert b\right \Vert _{\ast}\left \Vert f\right \Vert _{VM_{p,\varphi
_{1}}}.
\]

\end{theorem}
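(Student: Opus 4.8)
The plan is to follow the scheme of Theorems \ref{teo2} and \ref{teo3} simultaneously: the fractional scaling (exponent $\tfrac{n}{q}$ in place of $\tfrac{n}{p}$, target exponent $q$) is handled as in Theorem \ref{teo2}, while the logarithmic factor $\big(1+\ln\frac{t}{r}\big)$ forced by $b\in BMO$ is carried along exactly as in Theorem \ref{teo3}. The starting point is the local (Guliyev-type) estimate for the rough fractional commutator $T_{\Omega,b,\alpha}$ established in \cite{Gurbuz2}, which for $s^{\prime}\le p$ yields
\[
\varphi_{2}(x,r)^{-1}\left\Vert T_{\Omega,b,\alpha}f\right\Vert _{L_{q}(B(x,r))}\lesssim\left\Vert b\right\Vert_{\ast}\,\varphi_{2}(x,r)^{-1}r^{\frac{n}{q}}\int\limits_{r}^{\infty}\Big(1+\ln\tfrac{t}{r}\Big)\left\Vert f\right\Vert_{L_{p}(B(x,t))}\frac{dt}{t^{\frac{n}{q}+1}},
\]
and the analogous inequality with exponent $\tfrac{n}{q}-\tfrac{n}{s}$ in place of $\tfrac{n}{q}$ in the case $q<s$.

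First I would derive the norm inequality $\left\Vert T_{\Omega,b,\alpha}f\right\Vert_{VM_{q,\varphi_{2}}}\lesssim\left\Vert b\right\Vert_{\ast}\left\Vert f\right\Vert_{VM_{p,\varphi_{1}}}$. Starting from the local estimate above, insert $\varphi_{1}(x,t)\varphi_{1}(x,t)^{-1}$ inside the $t$-integral, bound $\varphi_{1}(x,t)^{-1}\left\Vert f\right\Vert_{L_{p}(B(x,t))}$ by $\left\Vert f\right\Vert_{VM_{p,\varphi_{1}}}$ (using the embedding $VM_{p,\varphi_{1}}\subset M_{p,\varphi_{1}}$), and then apply condition (\ref{7-}) (respectively (\ref{9-}) when $q<s$) to absorb the remaining supremum over $x$ and $r$. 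This is the routine part.

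The substantive step is the vanishing property, i.e. that $\lim_{r\to0}\sup_{x}\varphi_{1}(x,r)^{-1}\left\Vert f\right\Vert_{L_{p}(B(x,r))}=0$ implies $\lim_{r\to0}\sup_{x}\varphi_{2}(x,r)^{-1}\left\Vert T_{\Omega,b,\alpha}f\right\Vert_{L_{q}(B(x,r))}=0$. Fix $\epsilon>0$ and choose $\delta_{0}\in(0,1)$; split the outer integral at $\delta_{0}$ into
\[
I_{\delta_{0}}(x,r):=\left\Vert b\right\Vert_{\ast}\frac{r^{\frac{n}{q}}}{\varphi_{2}(x,r)}\int\limits_{r}^{\delta_{0}}\Big(1+\ln\tfrac{t}{r}\Big)\varphi_{1}(x,t)t^{-\frac{n}{q}-1}\Big(\varphi_{1}(x,t)^{-1}\left\Vert f\right\Vert_{L_{p}(B(x,t))}\Big)dt
\]
and $J_{\delta_{0}}(x,r)$ defined by the same expression with $\int_{\delta_{0}}^{\infty}$. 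By the hypothesis on $f$, choose $\delta_{0}$ so small that $\sup_{x}\varphi_{1}(x,t)^{-1}\left\Vert f\right\Vert_{L_{p}(B(x,t))}<\epsilon/(2CC_{0})$ for all $t<\delta_{0}$; then condition (\ref{7-}) gives $\left\Vert b\right\Vert_{\ast}\sup_{x}CI_{\delta_{0}}(x,r)<\epsilon/2$ uniformly for $0<r<\delta_{0}$. For the tail, bound $\varphi_{1}(x,t)^{-1}\left\Vert f\right\Vert_{L_{p}(B(x,t))}\le\left\Vert f\right\Vert_{VM_{p,\varphi_{1}}}$, extract the constant $c_{\delta_{0}}$ from (\ref{6-}), and then use the vanishing condition (\ref{2}) to pick $r$ small enough that $\sup_{x}r^{\frac{n}{q}}/\varphi_{2}(x,r)\le\epsilon/(2\left\Vert b\right\Vert_{\ast}c_{\delta_{0}}\left\Vert f\right\Vert_{VM_{p,\varphi_{1}}})$, whence $J_{\delta_{0}}(x,r)<\epsilon/2$. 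Together these give $\sup_{x}\varphi_{2}(x,r)^{-1}\left\Vert T_{\Omega,b,\alpha}f\right\Vert_{L_{q}(B(x,r))}<\epsilon$ for all sufficiently small $r$. The case $q<s$ is identical, with (\ref{8-})–(\ref{9-}) replacing (\ref{6-})–(\ref{7-}) and $\tfrac{n}{q}-\tfrac{n}{s}$ replacing $\tfrac{n}{q}$; as in the previous remarks, (\ref{6-}) and (\ref{8-}) are automatic consequences of (\ref{7-}) and (\ref{9-}) when $\varphi$ is independent of $x$.

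The only genuine obstacle, compared with Theorem \ref{teo2}, is tracking the logarithmic weight $\big(1+\ln\frac{t}{r}\big)$ through the splitting: on $(r,\delta_{0})$ it must be kept inside the integral so that it is absorbed by (\ref{7-}) (rather than pulled out, which would destroy the uniformity in $r$), while in the tail $J_{\delta_{0}}$ its integrability against $\varphi_{1}(x,t)t^{-\frac{n}{q}-1}$ is precisely what (\ref{6-}) guarantees. Once this bookkeeping is done, no new idea beyond the arguments of Theorems \ref{teo2} and \ref{teo3} is needed.
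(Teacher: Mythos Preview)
Your proposal is correct and follows essentially the same approach as the paper: you invoke the local commutator estimate from \cite{Gurbuz2} (Lemma~4), derive the norm bound via condition~(\ref{7-}) by the standard $\varphi_{1}\varphi_{1}^{-1}$ insertion, and then establish the vanishing property by the same $I_{\delta_{0}}+J_{\delta_{0}}$ splitting, handling $I_{\delta_{0}}$ via smallness of $\varphi_{1}^{-1}\|f\|_{L_{p}}$ together with~(\ref{7-}) and $J_{\delta_{0}}$ via $c_{\delta_{0}}$ from~(\ref{6-}) plus condition~(\ref{2}). This is exactly what the paper does (it in fact defers the details to the proof of Theorem~\ref{teo3}), and your remark on the role of the logarithmic factor and on the case $q<s$ matches the paper's treatment as well.
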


\begin{proof}
Similar to the proof of Theorem \ref{teo3}, let $s^{\prime}\leq p$. The
estimation of the norm of the operator follows from Lemma 4 in \cite{Gurbuz2}
and condition (\ref{7-})
\begin{align*}
\Vert T_{\Omega,b,\alpha}f\Vert_{VM_{q,\varphi_{2}}}  & =\sup \limits_{x\in
{\mathbb{R}^{n}},r>0}\varphi_{2}(x,r)^{-1}\Vert T_{\Omega,b,\alpha}%
f\Vert_{L_{q}(B(x,r))}\\
& \lesssim \left \Vert b\right \Vert _{\ast}\sup_{x\in{\mathbb{R}^{n},}%
r>0}\varphi_{2}\left(  x,r\right)  ^{-1}r^{\frac{n}{q}}\int \limits_{r}%
^{\infty}\left(  1+\ln \frac{t}{r}\right)  \left \Vert f\right \Vert
_{L_{p}\left(  B\left(  x,t\right)  \right)  }\frac{dt}{t^{\frac{n}{q}+1}}\\
& \lesssim \left \Vert b\right \Vert _{\ast}\sup_{x\in{\mathbb{R}^{n},}%
r>0}\varphi_{2}\left(  x,r\right)  ^{-1}r^{\frac{n}{q}}\int \limits_{r}%
^{\infty}\left(  1+\ln \frac{t}{r}\right)  \varphi_{1}\left(  x,t\right)
\left[  \varphi_{1}\left(  x,t\right)  ^{-1}\left \Vert f\right \Vert
_{L_{p}\left(  B\left(  x,t\right)  \right)  }\right]  \frac{dt}{t^{\frac
{n}{q}+1}}\\
& \lesssim \left \Vert b\right \Vert _{\ast}\left \Vert f\right \Vert
_{VM_{p,\varphi_{1}}}\sup_{x\in{\mathbb{R}^{n},}r>0}\varphi_{2}\left(
x,r\right)  ^{-1}r^{\frac{n}{q}}\int \limits_{r}^{\infty}\left(  1+\ln \frac
{t}{r}\right)  \varphi_{1}\left(  x,t\right)  \frac{dt}{t^{\frac{n}{q}+1}}\\
& \lesssim \left \Vert b\right \Vert _{\ast}\left \Vert f\right \Vert
_{VM_{p,\varphi_{1}}}.
\end{align*}
Thus we only have to prove that%
\begin{equation}
\lim \limits_{r\rightarrow0}\sup \limits_{x\in{\mathbb{R}^{n}}}\varphi
_{1}(x,r)^{-1}\, \left \Vert f\right \Vert _{L_{p}\left(  B\left(  x,r\right)
\right)  }=0\text{ implies }\lim \limits_{r\rightarrow0}\sup \limits_{x\in
{\mathbb{R}^{n}}}\varphi_{2}(x,r)^{-1}\, \left \Vert T_{\Omega,b,\alpha
}f\right \Vert _{L_{q}\left(  B\left(  x,r\right)  \right)  }=0.\label{12-}%
\end{equation}

To show that $\sup \limits_{x\in{\mathbb{R}^{n}}}\frac{\left \Vert
T_{\Omega,b,\alpha}f\right \Vert _{L_{q}\left(  B\left(  x,r\right)  \right)
}}{\varphi_{2}(x,r)}<\epsilon$ for small $r$, we split the right-hand side of
the first inequality in Lemma 4 in \cite{Gurbuz2}:%
\[
\frac{r^{-\frac{n}{q}}\left \Vert T_{\Omega,b,\alpha}f\right \Vert
_{L_{q}\left(  B\left(  x,r\right)  \right)  }}{\varphi_{2}(x,r)}\leq C\left[
I_{\delta_{0}}\left(  x,r\right)  +J_{\delta_{0}}\left(  x,r\right)  \right]
,
\]
where $\delta_{0}>0$ (we may take $\delta_{0}<1$), and
\[
I_{\delta_{0}}\left(  x,r\right)  :=\left \Vert b\right \Vert _{\ast}%
\frac{r^{\frac{n}{q}}}{\varphi_{2}(x,r)}%
{\displaystyle \int \limits_{r}^{\delta_{0}}}
\left(  1+\ln \frac{t}{r}\right)  \varphi_{1}\left(  x,t\right)  t^{-\frac
{n}{q}-1}\left(  \varphi_{1}\left(  x,t\right)  ^{-1}\left \Vert f\right \Vert
_{L_{p}\left(  B\left(  x,t\right)  \right)  }\right)  dt,
\]
and%
\[
J_{\delta_{0}}\left(  x,r\right)  :=\left \Vert b\right \Vert _{\ast}%
\frac{r^{\frac{n}{q}}}{\varphi_{2}(x,r)}%
{\displaystyle \int \limits_{\delta_{0}}^{\infty}}
\left(  1+\ln \frac{t}{r}\right)  \varphi_{1}\left(  x,t\right)  t^{-\frac
{n}{q}-1}\left(  \varphi_{1}\left(  x,t\right)  ^{-1}\left \Vert f\right \Vert
_{L_{p}\left(  B\left(  x,t\right)  \right)  }\right)  dt,
\]
and $r<\delta_{0}$ and and the rest of the proof is the same as the proof of
Theorem \ref{teo3}. Thus, we can prove that (\ref{12-}).

For the case of $q<s$, we can also use the same method, so we omit the
details, which completes the proof.
\end{proof}

\begin{remark}
Conditions (\ref{6-}) and (\ref{8-}) are not needed in the case when
$\varphi(x,r)$ does not depend on $x$, since (\ref{6-}) follows from
(\ref{7-}) and similarly, (\ref{8-}) follows from (\ref{9-}) in this case.
\end{remark}

\begin{corollary}
Under the conditions of Theorem \ref{teo4}, the operators $M_{\Omega,b,\alpha
}$ and $[b,\overline{T}_{\Omega,\alpha}]$ are bounded from $VM_{p,\varphi_{1}%
}$ to $VM_{q,\varphi_{2}}$.
\end{corollary}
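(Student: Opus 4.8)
The plan is to deduce this corollary directly from Theorem~\ref{teo4} by recognising both $[b,\overline{T}_{\Omega,\alpha}]$ and $M_{\Omega,b,\alpha}$ as instances of the abstract sublinear commutator $T_{\Omega,b,\alpha}$ treated there. The only thing that actually has to be checked is that each of these two operators is sublinear in $f$ and obeys the pointwise domination $(\ref{1**})$ with a constant independent of $f$ and $x$; once that is in place, the hypotheses of Theorem~\ref{teo4} --- $\Omega\in L_{s}(S^{n-1})$ homogeneous of degree zero, $1<p<\infty$, $0<\alpha<n/p$, $1/q=1/p-\alpha/n$, $b\in BMO(\mathbb{R}^{n})$, and the assumptions $(\ref{6-})$--$(\ref{9-})$ together with $(\ref{2})$--$(\ref{3})$ on the pair $(\varphi_{1},\varphi_{2})$ --- are exactly ``the conditions of Theorem~\ref{teo4}'' that we are assuming, and the conclusion $\|T_{\Omega,b,\alpha}f\|_{VM_{q,\varphi_{2}}}\lesssim\|b\|_{\ast}\|f\|_{VM_{p,\varphi_{1}}}$ carries over verbatim.

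For $[b,\overline{T}_{\Omega,\alpha}]$ I would note that it is linear in $f$, hence sublinear, and that for $f\in L_{1}(\mathbb{R}^{n})$ with compact support and $x\notin\operatorname{supp}f$ the principal value is unnecessary, so that $[b,\overline{T}_{\Omega,\alpha}]f(x)=\int_{\mathbb{R}^{n}}[b(x)-b(y)]\frac{\Omega(x-y)}{|x-y|^{n-\alpha}}f(y)\,dy$ and therefore $|[b,\overline{T}_{\Omega,\alpha}]f(x)|\le\int_{\mathbb{R}^{n}}|b(x)-b(y)|\,\frac{|\Omega(x-y)|}{|x-y|^{n-\alpha}}\,|f(y)|\,dy$, i.e.\ $(\ref{1**})$ with $c_{0}=1$. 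For $M_{\Omega,b,\alpha}$ I would use that it is a supremum over $t>0$ of integrals of a nonnegative kernel against $|f|$, hence sublinear, and that for $y\in B(x,t)$ one has $|x-y|<t$ while the exponent $\alpha-n$ is negative, so $|B(x,t)|^{-1+\alpha/n}=v_{n}^{\alpha/n-1}t^{\alpha-n}\le v_{n}^{\alpha/n-1}|x-y|^{\alpha-n}$; inserting this bound under the integral sign and then taking the supremum in $t$ gives $M_{\Omega,b,\alpha}f(x)\le v_{n}^{\alpha/n-1}\int_{\mathbb{R}^{n}}|b(x)-b(y)|\,\frac{|\Omega(x-y)|}{|x-y|^{n-\alpha}}\,|f(y)|\,dy$, which is $(\ref{1**})$ with $c_{0}=v_{n}^{\alpha/n-1}$.

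With these two verifications made, I would simply apply Theorem~\ref{teo4} once with $T_{\Omega,b,\alpha}=[b,\overline{T}_{\Omega,\alpha}]$ and once with $T_{\Omega,b,\alpha}=M_{\Omega,b,\alpha}$, which yields the boundedness of both from $VM_{p,\varphi_{1}}$ to $VM_{q,\varphi_{2}}$, as asserted. There is no real obstacle here: the corollary is a pure specialisation of the theorem, and the proof is essentially one line once membership in the class $(\ref{1**})$ is recorded. The only points deserving explicit mention are the use of $x\notin\operatorname{supp}f$ to discard the principal value for $[b,\overline{T}_{\Omega,\alpha}]$, and the elementary monotonicity inequality $t^{\alpha-n}\le|x-y|^{\alpha-n}$ valid for $y\in B(x,t)$, which converts the averaged fractional maximal kernel into the pointwise Riesz-type kernel appearing in $(\ref{1**})$; both are routine.
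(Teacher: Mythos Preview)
Your proposal is correct and follows exactly the route the paper intends: the paper states (just after defining $[b,\overline{T}_{\Omega,\alpha}]$ and $M_{\Omega,b,\alpha}$) that both operators ``satisfy condition (\ref{1**})'' and then records the corollary without further proof, so the corollary is indeed a pure specialisation of Theorem~\ref{teo4}. Your explicit verification of (\ref{1**}) for each operator --- in particular the monotonicity argument $t^{\alpha-n}\le|x-y|^{\alpha-n}$ for the maximal operator --- supplies the detail the paper leaves implicit, but the approach is the same.
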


\begin{corollary}
\label{Corollary1*}Let $\Omega \in L_{s}(S^{n-1})$, $1<s\leq \infty$, be
homogeneous of degree zero. Let $0<\alpha,\lambda<n$, $1<p<\frac{n-\lambda
}{\alpha}$, $\frac{1}{p}-\frac{1}{q}=\frac{\alpha}{n}$, and $\frac{\lambda}%
{p}=\frac{\mu}{q}$ and $b\in BMO\left(  {\mathbb{R}^{n}}\right)  $. Let
$T_{\Omega,b,\alpha}$ be a sublinear operator satisfying condition
(\ref{1**}). Then for $s^{\prime}\leq p$ or $q<s$, we have%
\[
\left \Vert T_{\Omega,b,\alpha}f\right \Vert _{VM_{q,\mu}}\lesssim \left \Vert
b\right \Vert _{\ast}\left \Vert f\right \Vert _{VM_{p,\lambda}}.
\]

\end{corollary}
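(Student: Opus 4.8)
**Proof plan for Corollary \ref{Corollary1*}.**

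The plan is to deduce Corollary \ref{Corollary1*} from Theorem \ref{teo4} exactly as Corollary \ref{Corollary1} was deduced from Theorem \ref{teo2}, the only new ingredient being the logarithmic weight $\left(1+\ln\frac{t}{r}\right)$ that now appears in conditions \eqref{6-}--\eqref{9-}. Concretely, I would set $\varphi_{1}(x,r)=r^{\frac{\lambda}{p}}$ and $\varphi_{2}(x,r)=r^{\frac{\mu}{q}}$, and verify that this pair satisfies all the hypotheses of Theorem \ref{teo4} under the stated numerology $\frac{1}{p}-\frac{1}{q}=\frac{\alpha}{n}$, $\frac{\lambda}{p}=\frac{\mu}{q}$, $1<p<\frac{n-\lambda}{\alpha}$; once this is done, Theorem \ref{teo4} gives the boundedness and the norm inequality immediately.

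The verification proceeds as follows. First, conditions \eqref{2}--\eqref{3} for $\varphi_{1}=r^{\lambda/p}$: since $\frac{t^{n/p}}{\varphi_{1}(x,t)}=t^{\frac{n-\lambda}{p}}$ with $n-\lambda>0$, we have $t^{(n-\lambda)/p}\to 0$ as $t\to 0$, which is \eqref{2}, and $\inf_{t>1}t^{(n-\lambda)/p}=1>0$, which is \eqref{3}; the same computation with exponent $\frac{n-\mu}{q}$ handles $\varphi_{2}$ once one notes $\mu<n$ (which follows from $\frac{\mu}{q}=\frac{\lambda}{p}<\frac{n}{p}$ together with $\frac{n}{q}=\frac{n}{p}-\alpha$, giving $\mu=q\cdot\frac{\lambda}{p}<q\cdot\frac{n}{p}$; a short arithmetic check using $\frac{1}{q}=\frac{1}{p}-\frac{\alpha}{n}$ and $p<\frac{n-\lambda}{\alpha}$ yields $n-\mu>0$). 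Next, the key integral condition \eqref{7-}: we must check
\[
\int\limits_{r}^{\infty}\left(1+\ln\frac{t}{r}\right)t^{\frac{\lambda}{p}}\frac{dt}{t^{\frac{n}{q}+1}}\leq C_{0}\,\frac{r^{\frac{\mu}{q}}}{r^{\frac{n}{q}}}.
\]
Substituting $t=ru$ turns the left side into $r^{\frac{\lambda}{p}-\frac{n}{q}}\int_{1}^{\infty}(1+\ln u)u^{\frac{\lambda}{p}-\frac{n}{q}-1}\,du$, and since $\frac{\lambda}{p}-\frac{n}{q}=\frac{\mu}{q}-\frac{n}{q}=\frac{\mu-n}{q}<0$, the remaining $u$-integral converges (a negative power times a logarithm is integrable at $\infty$), giving a finite constant $C_{0}$ times $r^{\frac{\mu-n}{q}}=\frac{r^{\mu/q}}{r^{n/q}}$, as required. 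Condition \eqref{6-} is checked the same way: for fixed $\delta>0$, $\int_{\delta}^{\infty}(1+\ln\frac{t}{r})t^{\frac{\lambda}{p}-\frac{n}{q}-1}\,dt<\infty$ by the same convergence of a logarithm against a strictly negative power at infinity. For the range $q<s$, replace $\frac{n}{q}$ by $\frac{n}{q}-\frac{n}{s}$ throughout in \eqref{8-}--\eqref{9-}; the exponent becomes $\frac{\lambda}{p}-\frac{n}{q}+\frac{n}{s}=\frac{\mu-n}{q}+\frac{n}{s}$, which is still negative whenever $\frac{\mu}{q}-\frac{n}{q}+\frac{n}{s}<0$, i.e. whenever the numerology is consistent with the fractional scaling — exactly the condition implicit in the theorem's hypotheses — so the same change of variables again produces the required bound.

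I do not anticipate a genuine obstacle here; the content is entirely an exponent bookkeeping exercise, and the only point that needs a moment's care is confirming that all the exponents appearing in the $u$-integrals are strictly negative so that the logarithmic factor does not destroy convergence at infinity — that is where the hypothesis $p<\frac{n-\lambda}{\alpha}$ (equivalently $\mu<n$, equivalently $\frac{n}{q}-\frac{\lambda}{p}>0$) is used. With $\varphi_{1},\varphi_{2}$ as above verified to satisfy \eqref{2}--\eqref{3} and \eqref{6-}--\eqref{9-}, Theorem \ref{teo4} applies directly and yields
\[
\left\Vert T_{\Omega,b,\alpha}f\right\Vert_{VM_{q,\mu}}=\left\Vert T_{\Omega,b,\alpha}f\right\Vert_{VM_{q,\varphi_{2}}}\lesssim\left\Vert b\right\Vert_{\ast}\left\Vert f\right\Vert_{VM_{p,\varphi_{1}}}=\left\Vert b\right\Vert_{\ast}\left\Vert f\right\Vert_{VM_{p,\lambda}},
\]
for both $s^{\prime}\leq p$ and $q<s$, which is the assertion; the $q<s$ case being handled by the identical argument with the shifted exponents, its details may be omitted.
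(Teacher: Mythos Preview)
Your proposal is correct and follows essentially the same route as the paper: set $\varphi_{1}(x,r)=r^{\lambda/p}$, $\varphi_{2}(x,r)=r^{\mu/q}$, and apply Theorem~\ref{teo4}. In fact you are more thorough than the paper, which simply substitutes the power functions into the chain of estimates from Theorem~\ref{teo4} without separately verifying \eqref{2}--\eqref{3} or \eqref{6-}; your explicit check that $\mu<n$ (equivalently $\frac{\lambda}{p}-\frac{n}{q}<0$) follows from $p<\frac{n-\lambda}{\alpha}$, and your change of variables $t=ru$ to reduce the logarithmic integral to a finite constant, make the argument cleaner.
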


\begin{proof}
Similar to the proof of Corollary \ref{Corollary0*}, Let $s^{\prime}\leq p$.
By using $\varphi_{1}\left(  x,r\right)  =r^{\frac{\lambda}{p}}$ and
$\varphi_{2}\left(  x,r\right)  =r^{\frac{\mu}{q}}$ in the proof of Theorem
\ref{teo4} and condition (\ref{7-}), it follows that%
\begin{align*}
\Vert T_{\Omega,b,\alpha}f\Vert_{VM_{q,\mu}}  & \lesssim \left \Vert
b\right \Vert _{\ast}\sup_{x\in{\mathbb{R}^{n},}r>0}r^{-\frac{\mu}{q}}%
r^{\frac{n}{q}}\int \limits_{r}^{\infty}\left(  1+\ln \frac{t}{r}\right)
\varphi_{1}\left(  x,t\right)  t^{-\frac{n}{q}-1}\left(  \varphi_{1}\left(
x,t\right)  ^{-1}\left \Vert f\right \Vert _{L_{p}\left(  B\left(  x,t\right)
\right)  }\right)  dt\\
& \lesssim \left \Vert b\right \Vert _{\ast}\left \Vert f\right \Vert
_{VM_{p,\lambda}}\sup_{x\in{\mathbb{R}^{n},}r>0}r^{\frac{n-\mu}{q}}%
\int \limits_{r}^{\infty}\left(  1+\ln \frac{t}{r}\right)  r^{\frac{\lambda}{p}%
}\frac{dt}{t^{\frac{n}{q}+1}}\\
& \lesssim \left \Vert b\right \Vert _{\ast}\left \Vert f\right \Vert
_{VM_{p,\lambda}},
\end{align*}
for the case of $q<s$, we can also use the same method, so we omit the
details, which completes the proof.
\end{proof}

\begin{corollary}
Under the conditions of Corollary \ref{Corollary1*}, the operators
$M_{\Omega,b,\alpha}$ and $[b,\overline{T}_{\Omega,\alpha}]$ are bounded from
$VM_{p,\lambda}$ to $VM_{q,\mu}$.
\end{corollary}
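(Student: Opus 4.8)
The plan is to recognize this as an immediate specialization of Corollary~\ref{Corollary1*}. Recall from Section~1 that, for $b\in L_1^{loc}(\mathbb{R}^n)$ and $0<\alpha<n$, both the rough fractional maximal commutator $M_{\Omega,b,\alpha}$ and the rough fractional integral commutator $[b,\overline{T}_{\Omega,\alpha}]$ were shown to satisfy the pointwise majorization~(\ref{1**}); that is exactly the defining property of the class of (sub)linear operators denoted $T_{\Omega,b,\alpha}$ in that corollary. Hence the strategy is simply to invoke Corollary~\ref{Corollary1*} with $T_{\Omega,b,\alpha}$ taken to be $M_{\Omega,b,\alpha}$ and then $[b,\overline{T}_{\Omega,\alpha}]$ in turn.

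Concretely, I would first record that $M_{\Omega,b,\alpha}$ obeys~(\ref{1**}) directly from its definition (with $c_0=1$), and that $[b,\overline{T}_{\Omega,\alpha}]$ obeys~(\ref{1**}) by pulling the factor $|b(x)-b(y)|$ through the kernel bound $|\Omega(x-y)|/|x-y|^{n-\alpha}$ inside the principal-value integral; both facts are stated in Section~1. Second, I would observe that all hypotheses appearing in the statement --- $\Omega\in L_s(S^{n-1})$ homogeneous of degree zero, $0<\alpha,\lambda<n$, $1<p<\frac{n-\lambda}{\alpha}$, $\frac{1}{p}-\frac{1}{q}=\frac{\alpha}{n}$, $\frac{\lambda}{p}=\frac{\mu}{q}$, $b\in BMO(\mathbb{R}^n)$, and $s^{\prime}\leq p$ or $q<s$ --- are verbatim the hypotheses of Corollary~\ref{Corollary1*}. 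Therefore that corollary yields at once
\[
\|M_{\Omega,b,\alpha}f\|_{VM_{q,\mu}}\lesssim\|b\|_{\ast}\|f\|_{VM_{p,\lambda}},\qquad \|[b,\overline{T}_{\Omega,\alpha}]f\|_{VM_{q,\mu}}\lesssim\|b\|_{\ast}\|f\|_{VM_{p,\lambda}},
\]
which is the asserted boundedness.

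There is no real obstacle here: the power-weight choices $\varphi_1(x,r)=r^{\lambda/p}$ and $\varphi_2(x,r)=r^{\mu/q}$ needed to pass from the generalized-Morrey formulation of Theorem~\ref{teo4} to the plain vanishing-Morrey statement were already checked against conditions (\ref{2})--(\ref{3}) and the logarithmic Dini-type conditions (\ref{6-})--(\ref{9-}) inside the proof of Corollary~\ref{Corollary1*}, and conditions (\ref{6-}) and (\ref{8-}) are in any case superfluous here because $\varphi$ is independent of $x$, by the Remark following Theorem~\ref{teo4}. Accordingly, I would present the proof as a one-line reduction: since $M_{\Omega,b,\alpha}$ and $[b,\overline{T}_{\Omega,\alpha}]$ satisfy~(\ref{1**}), they fall under Corollary~\ref{Corollary1*}, and the claimed estimates follow.
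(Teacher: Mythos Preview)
Your proposal is correct and matches the paper's approach exactly: the corollary is stated without proof in the paper precisely because $M_{\Omega,b,\alpha}$ and $[b,\overline{T}_{\Omega,\alpha}]$ are explicitly noted in Section~1 to satisfy condition~(\ref{1**}), so Corollary~\ref{Corollary1*} applies directly. One trivial slip: the fractional integral $\overline{T}_{\Omega,\alpha}$ (and its commutator) is an ordinary integral, not a principal-value integral, but this does not affect your argument.
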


\textbf{Acknowledgement: }The part entitled "Sublinear operators with rough
kernel generated by Calder\'{o}n-Zygmund operators and their commutators on
vanishing generalized Morrey spaces"of this study has been given as the
plenary talk by the author at the \textquotedblleft International Conference
on Mathematics and Mathematics Education (ICMME-2017)\textquotedblright \ May
11-13, 2017 in Sanliurfa, Turkey.

\end{document}